\newcommand\R{\mathbb{R}}
\newcommand{\e}{\upvarepsilon}
\newcommand\K{\mathcal{K}}
\newcommand{\Norm}[1]{\left\lVert#1\right\rVert}
\newcommand\norm{H^{\frac{\sigma}{2}}(\mathbb{R}^N)}
\newcommand{\fcaputoa}{{}^{c}_{a} D^{\upgamma}_{t}}
\newcommand{\fcaputo}{{}^{c}_{0} D^{\upgamma}_{t}}
\def\LL{\mathrm{L}}
\newtheorem{theorem}{Theorem}
\newtheorem{corollary}[theorem]{Corollary}
\newtheorem{lemma}[theorem]{Lemma}
\newtheorem{definition}[theorem]{Definition}
\newtheorem{remark}[theorem]{Remark}
\journal{}
\begin{document}

\begin{frontmatter}
\title{Nonlocal time porous medium equation with fractional time derivative}

\author[label1,label2]{Jean-Daniel~Djida}
\ead{jeandaniel.djida@usc.es}

\author[label1]{Juan J.~Nieto}
\ead{juanjose.nieto.roig@usc.es}

\author[label3]{I.~Area\corref{cor1}}
\ead{area@uvigo.es}
\cortext[cor1]{Corresponding author}

\address[label1]{Departamento de Estat\'{\i}stica, An{\'a}lise Matem\'{a}tica e Optimizaci\'on, Universidade de Santiago de Compostela, 15782 Santiago de Compostela, Spain}
\address[label2]{African Institute for Mathematical Sciences, AIMS-Cameroon, P.O. Box 608, Limb\'e Crystal Gardens, South West Region, Cameroon}
\address[label3]{Departamento de Matem\'atica Aplicada II,  E.E. Aeron\'autica e do Espazo,  Universidade de Vigo,  Campus As Lagoas s/n, 32004 Vigo, Spain}

\begin{abstract}
We consider nonlinear nonlocal diffusive evolution equations, governed by fractional Laplace-type operators, fractional time derivative and involving porous medium type nonlinearities. Existence and uniqueness of weak solutions are established using approximating solutions and the theory of maximal monotone operators. Using the De Giorgi-Nash-Moser technique, we prove that the solutions are bounded and H\"{o}lder continuous for all positive time.
\end{abstract}

\begin{keyword}
Nonlinear fractional diffusion \sep regularity \sep fractional Laplacian \sep fractional derivatives \sep existence of weak solutions \sep energy estimates

\MSC[2010] 35B65 \sep 26A33 \sep 35K55
\end{keyword}

\end{frontmatter}

\section{Introduction}
In this paper we analyze nonlocal nonlinear equations with fractional time derivative. The basic operators involved are the so-called fractional Laplacian in $\R^N$  \cite{Hitch2012,Vazquez2011NonlinearDW}, $(-\Delta)^{s}$ with $s\in(0,1)$ and the so-called fractional derivative in the sense of extended Caputo or Marchaud  $\partial_{t}^{\upgamma}$, $\upgamma \in(0,1)$ in $(0,T)$, for $T>0$ \cite{samko1993fractional,Marchaud,Ferrari2017}. \medskip

Nonlinear diffusion problems of the parabolic type involving fractional Laplacian and fractional time derivative operators have recently attracted the interest of many authors (see e.g.~\cite{Vazquez2011NonlinearDW, Allen1, Bucur} and references therein). In this paper we focus our attention on a problem posed on the whole space domain. More precisely, we consider the fractional diffusion equation with fractional time derivative known as the fractional time porous medium equation
\begin{equation}\label{Eq:main_problem}
\partial_t^{\upgamma} w + \mathcal{L}^{s} \left(w^{m} \right) = f,  \quad  \text{for all }~ (t,x) \in Q,\quad ~ w(x,0) = g(x), ~\text{for all} ~ x \in \R^{N},
\end{equation}
where $m>1$, $s,\upgamma \in(0,1)$, and $N \geq 1$. The linear operator $ \mathcal{L}^{s}$ is a fractional power of the Laplacian subject to suitable Dirichlet boundary conditions. The initial value data $g \in L^{1}(\R^N)$ has compact support and the right hand side $f \in L^{\infty}(Q)$ is a smooth bounded forcing function.\medskip

The evolutionary nonlinear equation \eqref{Eq:main_problem} with the fractional time derivative is analogous to the abstract evolutionary equations with usual time derivative
\begin{equation}\label{abstractevolution}
\partial_t w + \mathcal{L} \upvarphi(w) = f, 
\end{equation}
where $\upvarphi : \R \to \R$ is a nondecreasing function. This type of equation arises in flow in porous medium or plasma physics, depending on the choice of $\upvarphi$. Solution to \eqref{abstractevolution} has been studied \cite{Crandall1987} for the regularizing effect of \eqref{abstractevolution}. In the case $ \mathcal{L}^{s}:=\left(-\Delta \right)^{s}$ and $\upvarphi(w) := w^{m}$, with $m>1$, and $f=0$, the problem becomes the nonlocal porous medium equation. Regularity and existence results for this specific case have been obtained in \cite{Pablo1, Pablo2, Pablo3, RosOton}, by using De Giorgi methods in order to show that the solutions of~\eqref{abstractevolution} are $s$-H\"{o}lder continuous, which implies H\"{o}lder continuity. The control of the oscillation in the nonlocal setting followed the procedure developed in \cite{CaffarelliChanVasseur} for a linear problem with a rough kernel, combined with some ideas which deal with the nonlinearity borrowed from \cite{AthanasopoulosCaffarelli}.\medskip

Recently another variant of the problem~\eqref{abstractevolution} was extended in~\cite{ Allen1, Allen2, Allen}, in the case $ \mathcal{L}^{s}:=\left(-\Delta \right)^{s}$ and $\upvarphi(w):= w$, with fractional derivative $\partial_{t}^{\gamma}$ in the sense of extended Caputo, so that the problem becomes a nonlocal linear problem
\begin{equation}
\label{nolocalproblem}
\partial_{t}^{\gamma}w - \int \left[w(t,y)-w(t,x) \right]\K(t,x,y) = f(t,x).
\end{equation}
Again, the De Giorgi techniques were used to prove H\"{o}lder continuity for solutions to \eqref{nolocalproblem} of divergence form which is a linear analogue of our problem \eqref{Eq:main_problem}.\medskip

The authors in \cite{Allen2} utilized solutions that are weak-in-time to prove H\"{o}lder continuity. The same authors also adapted the methods in \cite{Allen3} to prove H\"{o}lder continuity for a nonlocal porous medium equation with inverse potential pressure. As already mentioned in \cite{Allen4}, considering weak-in-time solutions is advantageous for existence and regularity results.\medskip

As far as we know, the case of fully nonlinear and nonlocal variant of \eqref{abstractevolution} is still open. More precisely, the case $ \mathcal{L}^{s}:=\left(-\Delta \right)^{s}$, $\upvarphi(w) := w^{m}$, with $m>1$ and the fractional derivative $\partial_{t}^{\gamma}$ in the sense of extended Caputo or Marchaud. This is the main aim of this paper., i.e to analyze problem~\eqref{Eq:main_problem}. \medskip

The paper is organized as follows. In Section~\ref{sec:2} the main results are stated, regarding existence, uniqueness, and regularity. Later, in order to prove existence and uniqueness, a preliminary step will be to show that the approximate solutions are strong in time. So, in Section \ref{sec:functional_setting}, we give some properties of the fractional time derivative and its discretized version which will allow us to use Steklov averages, hence strong solution in time. Next we provide some functional inequalities related to the fractional Laplacian. We will end the section with the concept of weak solutions. In Section~\ref{sec:Existence_Uniqueness} we provide proofs of existence and uniqueness of solutions stated in Theorem~\ref{theo:existence}. Section~\ref{sec:regularity} is devoted for the proof of H\"{o}lder regularity solution for the problem \eqref{Eq:main_problem}.\medskip

We recall the notation that will be intensively used throughout the paper:\medskip
\begin{itemize}
\item $\gamma$ denotes the order the extended Caputo derivative or Marchaud derivative;

\item $s$  denotes the order of the spatial fractional operator associated to the fractional Laplacian;

\item $a$ stands for the initial time for which our equation is defined;

\item $N$ refers to the space dimension;

\item $\Lambda$ denotes the elliptic positive constant which gives the bound of the kernel of the fractional Laplacian;

\item $\Lambda_{1}, \Lambda_{2}$ stands for the elliptic positive constant which gives the bound of the kernel of the fractional derivative;

\item $\upvarepsilon$ refers to the time length of the discrete approximation;

\item $t, \tau$ denote time variables;

\item $\psi$ stands for a cut-off function;

\item $Q=(0,T) \times \R^{N}$ denotes the domain;

\item $\Gamma_{R} = \{ x \in B_{R}, t \in (0,T)\}$ denotes the space with center the ball with radius $B_{R}$;

\item $w(t,x)$ denotes signed solution to problem \eqref{Eq:main_problem};

\item Finally, for the sake of brevity, $\upvarphi(w) = \left|w\right|^{m-1}w = w^m$, $m>1$.\medskip
\end{itemize}

\section{Nonlocal evolution problem, and main results}\label{sec:2}

\subsection{Nonlocal evolution problem. Existence and uniqueness of solutions}\medskip

Our first goal is to prove an existence and uniqueness result. Actually, since our result holds for more general nonlinearities than the powers, we will adopt the more general context and consider the problem
\begin{equation}\label{Eq:TFPME.Problem}
\begin{cases}
\displaystyle \partial_{t}^{\upgamma} + \mathcal{L}^{s}(\varphi(w))= f, &  ~ {\rm in} \quad Q,\\
\displaystyle w(0,x)=w_0(x), & ~{\rm in} \quad \R^N,
\end{cases}
\end{equation}
where $\upvarphi:\R\to\R$ is a continuous, smooth and increasing function such that $\upvarphi'>0$, $\upvarphi(\pm\infty)=\pm\infty$ and $\upvarphi(0)=0$ (see e.g. \cite{AthanasopoulosCaffarelli}, where this type of conditions were considered). The leading example will be $\upvarphi(w)=|w|^{m-1}w$ with $m>0$\,. \medskip

\begin{definition}\label{def.H*sols}
Let $w \in L^{2}\left([0,T];\mathcal{D}(\mathcal{B}) \right) \cap H^{\frac{\upgamma}{2}} \left([0,T]; H^*(\R^N)\right)$. Then, $w$ is a solution to \eqref{Eq:main_problem} if $\varphi(w) \in L^1([0,T],  H^{*}(\R^N) )$ and
\begin{equation}\label{Eq:testH*}
\int_0^T \int_\R^N \psi \partial_{t}^{\upgamma} w = \int_{0}^{T}\mathcal{E}(w,\psi)= \int_{0}^{T}\int_{\R^{N}} f ~\psi, \,\qquad\forall\, \psi \in C^1_c([0,T], H^{*}(\Omega)),
\end{equation}
providing that $\mathcal{L}^{s}$ is an isomorphism from $H$ into $H^*$ as introduced in \cite{BonforteSireVazquez}.
\end{definition}

The main result on the existence and uniqueness is as follows.    
\begin{theorem}\label{theo:existence}
Let $w(x,0) \in H^{*}(\R^N)$. Then for all $t>0$, $ m > m^{*}:= (\upgamma N - 2s)/\upgamma N $, and $N>2s/\upgamma$, there exists a unique solution
$\displaystyle w \in L^{2}\left([0,T];\mathcal{D}(\mathcal{B}) \right) \cap H^{\frac{\upgamma}{2}} \left([0,T]; H^*(\R^N)\right)$ to problem \eqref{Eq:TFPME.Problem}, satisfying the contraction property
$
\Norm{w(t)- v(t)}_{H^{*}} \leq \Norm{w(0)- v(0)}_{H^{*}}.
$ 
\end{theorem}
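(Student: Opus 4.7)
The plan is to adapt the Brezis--K\"omura scheme for maximal monotone operators, with the pivot space $H^{\ast}(\R^N)$ suggested by Definition~\ref{def.H*sols}, and to combine it with an implicit time discretization of $\partial_t^{\upgamma}$ based on the Steklov-averaging machinery of Section~\ref{sec:functional_setting}. With a discretization step $\e>0$ and the associated discrete Caputo kernel, \eqref{Eq:TFPME.Problem} reduces at each step $n$ to a stationary equation of the form
\begin{equation*}
a_0^{\e}\,w_n^{\e}+\mathcal{L}^{s}\bigl(\upvarphi(w_n^{\e})\bigr)=F_n^{\e},
\end{equation*}
where $F_n^{\e}$ collects the memory contributions of $w_0,w_1^{\e},\dots,w_{n-1}^{\e}$ and the forcing $f$. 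Since $\upvarphi$ is continuous and strictly increasing with $\upvarphi(\pm\infty)=\pm\infty$, and $\mathcal{L}^{s}$ is an isomorphism $H\to H^{\ast}$, the map $v\mapsto\mathcal{L}^{s}\upvarphi(v)$ is maximal monotone on $H^{\ast}$, and Brezis' perturbation theorem yields a unique $w_n^{\e}\in\mathcal{D}(\mathcal{B})$ at each step.

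I would then establish uniform a priori bounds on the piecewise interpolants $w^{\e}$. Testing the discrete equation against $\upvarphi(w_n^{\e})$ and invoking the discrete analogue of the fractional Kato chain-rule inequality $\upvarphi(w)\,\partial_t^{\upgamma}w\ge\partial_t^{\upgamma}\Phi(w)$ with $\Phi'=\upvarphi$ controls $\int_0^T\mathcal{E}(w^{\e},\upvarphi(w^{\e}))\,dt$, and hence $\Norm{w^{\e}}_{L^{2}([0,T];\mathcal{D}(\mathcal{B}))}$; pairing in $H^{\ast}$ against $w^{\e}$ together with the explicit kernel representation of $\partial_t^{\upgamma}$ delivers the $H^{\upgamma/2}([0,T];H^{\ast})$ bound. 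The hypotheses $N>2s/\upgamma$ and $m>m^{\ast}=(\upgamma N-2s)/\upgamma N$ enter precisely here: through the Hardy--Littlewood--Sobolev embedding they guarantee that $\upvarphi(w)\in H^{\ast}$ as soon as $w$ has the integrability furnished by the energy, so that the two estimates close on themselves.

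The passage $\e\to 0$ then follows from the Aubin--Lions--Simon compactness theorem: the $\mathcal{D}(\mathcal{B})$-bound gives compactness in space, the fractional-in-time bound gives equicontinuity, and a strong limit $w\in L^{2}([0,T];\mathcal{D}(\mathcal{B}))\cap H^{\upgamma/2}([0,T];H^{\ast})$ is extracted. Continuity of $\upvarphi$ together with Minty's monotonicity trick identifies the weak limit of $\mathcal{L}^{s}\upvarphi(w^{\e})$ as $\mathcal{L}^{s}\upvarphi(w)$, so that $w$ satisfies \eqref{Eq:testH*}.

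Finally, for uniqueness and the $H^{\ast}$-contraction I would subtract the equations satisfied by two solutions $w,v$ and pair the difference in $H^{\ast}$ against $\mathcal{L}^{-s}\bigl(\upvarphi(w)-\upvarphi(v)\bigr)$, which is a legitimate test since $\mathcal{L}^{s}:H\to H^{\ast}$ is an isomorphism. Monotonicity of $\upvarphi$ renders the diffusive contribution nonnegative, while the fractional Kato inequality applied to $\Norm{w-v}_{H^{\ast}}^{2}$ absorbs the time term, giving directly $\Norm{w(t)-v(t)}_{H^{\ast}}\le\Norm{w(0)-v(0)}_{H^{\ast}}$. The step I expect to be the main obstacle is the rigorous justification of this fractional Kato inequality inside the Steklov-averaged discrete scheme: the nonlocal memory of $\partial_t^{\upgamma}$ does not commute with pointwise composition by $\upvarphi$, and controlling the resulting commutator errors so that they vanish as $\e\to 0$ is what welds together the a priori estimates and the contraction argument.
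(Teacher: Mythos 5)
Your argument is sound in outline, but it takes a genuinely different route from the paper's. The paper does \emph{not} perform a direct implicit time discretization of $\partial_t^{\upgamma}$ with explicit memory terms. Instead it embeds the fractional-time dynamics into the Markovian evolution $\frac{d}{dt}U(t)=\mathcal{B}(t)U(t)$ in the history space $Z=C\left((-\infty,0];H^{*}\right)$, where $\mathcal{B}\rho=\rho'$ with the nonlocal boundary condition $\int_{-\infty}^{0}J(\theta)\rho'(\theta)\,d\theta\in\mathcal{A}\rho(0)$ encoding the fractional derivative, following Ito's framework. Existence, uniqueness, and the $H^{*}$-contraction then come in one stroke from Crandall--Liggett applied to $\mathcal{B}$ in $Z$, once one verifies accretivity (via a maximum-of-norm argument at $\theta=\theta_0$) and the range condition (via the resolvent identity $\rho(0)=(R(\lambda)I-\mathcal{A})^{-1}\int J q'$). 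The discretization scheme~\eqref{Eq:main-2} appears only in the paper's Step~2, whose purpose is to connect the abstract semigroup solution to the weak formulation~\eqref{Eq:weak}, not to construct the solution.

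Your scheme---stationary Brezis step with $a_0^{\e}w_n^{\e}+\mathcal{L}^s\upvarphi(w_n^{\e})=F_n^{\e}$, energy bounds via the fractional Kato chain rule, Aubin--Lions--Simon compactness, Minty---is a legitimate alternative, and is arguably closer to how one would implement things numerically. What it buys you is a concrete construction without invoking the abstract history-space machinery; what it costs is exactly the difficulty you already flag at the end: the nonlocal memory in $\partial_t^{\upgamma}$ does not commute with $\upvarphi$, so the discrete Kato inequality $\upvarphi(w)\,\partial_{\e}^{\upgamma}w\ge\partial_{\e}^{\upgamma}\Phi(w)$ and the quadratic variant $\langle\partial_{\e}^{\upgamma}u,u\rangle_{H^*}\ge\tfrac12\partial_{\e}^{\upgamma}\Norm{u}_{H^*}^2$ must be proved for the specific discretization used (they hold for the L1 scheme by convexity and positivity of the kernel weights, but this is not automatic for arbitrary quadratures). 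The paper avoids having to establish these discrete commutator estimates altogether: once $\mathcal{B}$ is m-accretive on $Z$, Crandall--Liggett delivers both the contraction $\Norm{w(t)-v(t)}_{H^*}\le\Norm{w(0)-v(0)}_{H^*}$ and the convergence of the implicit scheme without ever touching the chain rule. If you pursue your route, the paper's Appendix~B and the estimates of Lemma~\ref{lem:energy1} (which use Steklov averages and Lemma~\ref{lem:switch} precisely to control this commutator) show the kind of care required; your blind proposal correctly identifies this as the crux.
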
 

 
The proof of this result included in Section \ref{sec:Existence_Uniqueness} is essentially based on the approximating solution and the maximal monotonicity operators through the Crandall-Liggett theory. The key point is based on showing that the fractional derivative and the fractional Laplacian operators can be suitably defined as a maximal monotone operator on the Hilbert spaces $L^{2}\left([0,T];\mathcal{D}(\mathcal{B}) \right)$ and $H^*$, respectively. We follow the  approach in \cite{BonforteSireVazquez} dealing with the fractional Laplacian case, which happens to be the extension of the Brezis approach \cite{Brezis} where the operator ${\mathcal{L}}^{s}$ is characterized as the sub-differential of a convex functional. Regarding the fractional derivative, we stress out that the maximal monotonicity has been study in e.g. \cite{Ito,Bajlekova}. The method produces not only existence and uniqueness of a semigroup of solutions in the linear and nonlinear setting, but also a number of important estimates, typical of evolution processes governed by maximal monotone operators. \medskip

\subsection{Regularity of solutions}
In order to study the H\"{o}lder regularity, we rewrite the problem \eqref{Eq:main_problem} in the form 
\begin{equation}
\label{Eq:main-vartheta}
\partial_{t}^{\upgamma}\vartheta(w) - \int \left[ w(t,y) - w(t,x) \right] \K(t,x,y) \ dy = f(t,x),
\end{equation}
where $\vartheta := \vartheta (w)= \upvarphi^{-1}(w)$ is a continuous increasing real-valued function, with $\vartheta (\pm \infty) = \pm \infty$ satisfying
\begin{equation}\label{Eq:properties_beta}
\begin{cases}
\displaystyle \vartheta^{\prime}(\tau)~ \text{exists for all}~\tau \neq 0,\\
\displaystyle \vartheta^{\prime}(0) \geq c_{1} \geq 0, ~ \text{for some constant}~~ c_{1},\\
\displaystyle \partial_{\tau}^{\upgamma}\vartheta(\tau) \geq \frac{1}{(1-\upgamma)\Gamma(1-\upgamma)} t^{1-\upgamma}, \quad \text{for}~~\tau > 0,\\
\displaystyle \vartheta(0) = 0. 
\end{cases}
\end{equation}

\noindent Problems \eqref{Eq:main-vartheta} and \eqref{Eq:properties_beta} can be thought as a boundary version of a singular equation which includes the porous media equation. In addition to the existence and uniqueness result that will be proved for this problem, we also prove that $w$ is a continuous function of $x$ and $t$ up to the boundary of $Q$. Its modulus of continuity will depend on $s,\upgamma$, and $w$. If in addition, we assume that $w$ has near zero a homogeneous behavior such as that of the porous media, i.e., $w^{m}$, $m > 1$, then we obtain a H\"{o}lder modulus of continuity. \medskip

In view of the form of equation ~\eqref{Eq:main-vartheta}, the equivalent weak formulation is needed. To this end, we formally multiply~\eqref{Eq:main-vartheta} by a test function $\psi \in C^{1}_{0}\left(\R^N\right) \times C^{1}(0,T)$, such that 

\begin{equation}\label{weak-nonlocal}
\int_{0}^{T}\int_{\R^{N}} \psi~ \partial_{t}^{\gamma} \vartheta (w)  dx \ dt +  \int_{0}^{T}\mathcal{E}(w,\psi)= \int_{0}^{T}\int_{\R^{N}} f ~\psi dx \ dt.
\end{equation}
Formally for $w=\left(\mathrm{Tr}(\vartheta)\right)^{\frac{1}{m}}$ and under the assumption that $f \in L^{\infty}((a,T)\times \R^{N})$ we have the following definition.

\begin{definition}\label{Conceptweaksolutions}
\noindent We say that the pair $(w,\vartheta)$ is a weak solution to problem~\eqref{Eq:main-vartheta} if for the function $w$ we have $w \in L^{1} \left((a,T); W^{1,1}_{loc}(\R^N)\right)$, $w = \left(\mathrm{Tr}(\vartheta)\right)^{\frac{1}{m}} \in L^{1} \left((a,T) \times \R^{N} \right)$, and if for all $T>0$  and $f \in L^{\infty}((a,T) \times \R^{N})$, the following relation holds
\begin{multline}\label{Eq:weak}
-\int_{\R^{N}} \int_{a}^{T} \left( \nabla w(t,x), \nabla \psi(t,x) \right) dx~dt  -\int_{\R^{N}}\int_{a}^{T} \vartheta (t,x)\partial^{\gamma}_{t}\psi(t,x)dt~dx \\
 + \gamma \int_{\R^{N}} \int_{a}^{T}\int_{a}^{t} \left[\vartheta(t,x)-\vartheta(\tau,x) \right]\left[\psi(t,x)- \psi(\tau,x) \right] \left(t-\tau\right)^{-1-\gamma} d\tau~dt~dx \\
-\int_{\R^{N}} \int_{a}^{T} \left[\psi(t,x)\vartheta(a,x)+ \psi(a,x)\vartheta(t,x) \right]\left(t-a\right)^{-\gamma}~dt~dx \\
 + \int_{\R^{N}}\int_{a}^{T}\vartheta(t,x)\psi(t,x) \left[ \frac{1}{(T-t)^{\gamma}} + \frac{1}{(t-a)^{\gamma}} \right] d\tau~dt~dx 
 =\int_{a}^{T}\int_{\R^N} f(t,x)\psi(t,x)dx~dt.
\end{multline}
\end{definition}

As mentioned previously a powerful method of construction of solutions of evolutions equations is the so-called implicit time discretization \cite{Crandall-Liggett}. The construction of an approximate solution of the problem in a time interval $(a,T)$ proceeds by dividing the time interval $(a,T)$ into $k$ subintervals of length $\varepsilon = T/k$ and then defining the approximate solution $(w_{\varepsilon}, \vartheta_{\varepsilon})$ to \eqref{Eq:main-vartheta} constant on each subinterval in the following way: 
\begin{equation}\label{Eq:main-2}
\upgamma \varepsilon \sum _{i<j} \frac{\left[\vartheta(a + \varepsilon j, x)- \vartheta(a + \varepsilon i, x) \right]}{\left( \varepsilon(j-i)  \right)^{1+\upgamma}} 
= \int_{\R^N} \left[ w(a + \varepsilon j, y) - w(a + \varepsilon j, x) \right] \K(t,x,y) \ dy + f(t,x),
\end{equation}
for each $-\infty <i < j \leq k$, with $w_{\varepsilon}(0,x) = g$ on $\R^N$. In each step, for $i,j \in {\mathbb{N}}$, $w(a+\varepsilon i, x) = \left( \mathrm{Tr}(\vartheta)(\varepsilon(a+\varepsilon i, x ))\right)^{1/m}$ is known and $w(a+\varepsilon j, x)$ and $\vartheta(a+\varepsilon j, x)$ are the unknowns. \medskip

Next we state our main theorem on H\"{o}lder regularity that will be proved in Section~\ref{sec:regularity}. The key ingredient of the proof is the De Giorgi's method, (see~\cite{Allen1, Pablo3, CaffarelliChanVasseur,  Allen2, DeGiorgi}). We control the oscillation following the procedure developed in \cite{Allen1, Pablo3, Allen2}, combined with some ideas to deal with the nonlinearity that will appear in the fractional derivative. These techniques will be accompanied by the help of Crandall-Liggett theorem~\cite{ Crandall-Liggett} and some ideas from \cite{AthanasopoulosCaffarelli}. More precisely, we will prove that the oscillation of the solution in space-time $\beta$-cylinders of radius $R$, is reduced in a fraction of the cylinder $
\mathcal{C}_{\gamma R}$, $\zeta<1$, at least by a constant factor $\kappa_*$. This implies $\beta$--H\"older continuity.\medskip
\begin{theorem}\label{theo:regularity}
Let $w$ be a bounded weak solution which satisfies~\eqref{Eq:weak} with $f \in L^{\infty}(Q)$ and $\vartheta$ satisfying also~\eqref{Eq:properties_beta}. Furthermore, let the kernels $\mathscr{K}$ and $\K$ satisfy respectively~~\eqref{eq:kernel_derivative} and~\eqref{Eq:kernel_laplacian}. Then $w$ is H\"{o}lder continuous in $Q$ for some exponent $\beta\in(0,1)$ such that $w\in C^{\beta}(Q)$, with $\beta = \log\kappa_*/\log \zeta$.
\end{theorem}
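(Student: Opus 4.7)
The plan is to follow the De Giorgi--Nash--Moser scheme flagged right after the theorem statement: establish a single oscillation-reduction lemma and iterate it on a geometric sequence of nested $\beta$-cylinders $\mathcal{C}_{\zeta^k R}$ adapted to the parabolic scaling $t\sim R^{2s/\gamma}$ induced by $\partial_t^\upgamma$ and $\mathcal{L}^s$. Each iteration will shrink the oscillation by the factor $\kappa_*<1$, from which the H\"older exponent $\beta = \log\kappa_*/\log\zeta$ follows by standard dyadic summation over the nested cylinders.

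First I would exploit the scale and affine invariance of \eqref{Eq:main-vartheta} to normalize: after rescaling and a translation, we may assume $|w|\leq 1$ on the unit $\beta$-cylinder $\mathcal{C}_1$, that $f$ is small, and that $w\leq 0$ on a definite proportion of $\mathcal{C}_1$. The goal then becomes an $L^\infty$ improvement of the form $w\leq 1-\kappa_*$ on $\mathcal{C}_\zeta$. The building block is a Caccioppoli-type energy inequality for the truncation $(\vartheta(w)-k)_+$: testing \eqref{Eq:weak} against $\psi = (\vartheta(w)-k)_+\,\chi^2$ for a smooth space-time cut-off $\chi$, and using a fractional Kato-type inequality of the form $\psi\,\partial_t^{\upgamma}\vartheta(w) \geq \tfrac{1}{2}\,\partial_t^{\upgamma}\psi^2 + \text{error}$ together with the lower bound on $\partial_\tau^{\upgamma}\vartheta(\tau)$ in \eqref{Eq:properties_beta}, one extracts control of $\sup_t\int \psi^2 + \int \mathcal{E}(\psi,\psi)$ by a bulk $L^2$ term and explicit tail contributions coming from $\mathscr{K}$ and $\K$.

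Next I would run the two classical De Giorgi lemmas on this energy inequality. The \emph{first} lemma states that if the $L^2$ mass of $(w-k)_+$ on the large cylinder is small enough, then $(w-k)_+\equiv 0$ on the smaller cylinder; this is the usual Moser iteration along truncation levels $k_n\nearrow 1$ and radii $R_n\searrow \zeta$, combined with the fractional Sobolev embedding attached to $H^{s}(\R^N)$. The \emph{second} lemma is a measure-propagation statement: if $\{w\leq 0\}$ already occupies a definite fraction of $\mathcal{C}_1$ and the level set $\{w\geq 1-\lambda\}$ is not small, then the intermediate set $\{0<w<1-\lambda\}$ has substantial measure. Coupling the two via an isoperimetric argument, in the spirit of \cite{CaffarelliChanVasseur} and as adapted to the nonlocal parabolic setting in \cite{Allen1,Pablo3}, gives the required reduction of $\sup w$ in $\mathcal{C}_\zeta$, and hence the oscillation-reduction lemma.

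The hard part will be the derivation of the energy inequality and the propagation of measure, precisely because of the memory character of $\partial_t^{\upgamma}$. The Kato-type bound $\psi\,\partial_t^{\upgamma}\vartheta(w)\geq \tfrac{1}{2}\partial_t^{\upgamma}\psi^2$ must be justified with genuine error terms, using the Marchaud representation and the quantitative lower bound in \eqref{Eq:properties_beta}; the discrete counterpart on the approximants \eqref{Eq:main-2} is safer, so I would work first on $(w_{\e},\vartheta_{\e})$ and pass to the limit afterwards. In parallel, the second De Giorgi lemma must track the full time history of the solution through the singular kernel $(t-\tau)^{-1-\upgamma}$ in \eqref{Eq:weak}, which forces the "intermediate slice" argument to be performed on a time half-line rather than on a single slice, with explicit control of the fractional tails. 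Once these two technical points are settled, the oscillation decrease and the H\"older estimate follow from the standard iteration.
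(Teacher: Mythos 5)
Your proposal is essentially the paper's own proof: the same De Giorgi scheme with an energy (Caccioppoli) inequality, a first ``$L^2\to L^\infty$'' lemma proved via a nonlinear recurrence on truncated energies, a second measure-propagation lemma built on an intermediate sequence of cutoffs, the resulting oscillation-reduction statement, and an iterative rescaling over $\beta$-cylinders giving $\beta=\log\kappa_*/\log\zeta$. The only notable technical variation is cosmetic: you test against truncations $(\vartheta(w)-k)_+\chi^2$ and invoke a fractional Kato-type inequality $\psi\,\partial_t^\upgamma\vartheta(w)\ge\tfrac12\partial_t^\upgamma\psi^2+\text{error}$ in the style of Allen--Caffarelli--Vasseur, while the paper truncates $w$ itself (test function $(w-\psi_L)_+$) and absorbs the fractional time term into the Alt--Luckhaus-type functional $\mathcal{B}_{\psi_L}(w)=\int_0^{(w-\psi_L)_+}\partial_\varepsilon^\upgamma\vartheta_\varepsilon(M-\psi_L-\tau)\,\tau\,d\tau$ in the style of de Pablo--Quir\'os--Rodr\'iguez; both routes deliver the same energy estimate and rely on the lower bound for $\partial^\upgamma_\tau\vartheta$ in~\eqref{Eq:properties_beta}. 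You also correctly flag the two genuinely delicate points — the chain-rule error for $\partial_t^\upgamma$ and the need to track full time tails through the Marchaud kernel — and your remedy (work on the discrete approximants $(w_\varepsilon,\vartheta_\varepsilon)$, then pass to the limit via Steklov averages) is exactly what the paper does in Lemma~\ref{lem:energy1}.
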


\section{Preliminary results}\label{sec:functional_setting}

In this section we recall some previous results as well as we prove some new results that will be useful in our further analysis.\medskip

\subsection{The operators}
When the permeability of the medium changes over time such as in porous medium equation, it might be interesting to use a fractional time derivative. Among the different fractional derivatives existing in the literature, in this paper we consider the extended Caputo or Marchaud derivative, since the problem we study is in the divergence from. The usual Caputo derivative for $0<\upgamma<1$ is defined by
\[
\fcaputoa v(t):= \frac{1}{\Gamma(1-\upgamma)} \int_{a}^{t}\left(t-\tau\right)^{-\upgamma}v^{\prime}(\tau)d\tau.
\]
Using an integration by parts and proceeding as defined in \cite{Ferrari2017, Allen1, Bucur, Allen2, BucurFerrari2016}, defining $v(t) \equiv v(a)$ for $t<a$, the extended form or the Marchaud derivative is defined as
\begin{equation}\label{Eq:rescaled_caputo}
\partial_t^{\upgamma} v(t,\cdot)= \upgamma \int_{-\infty}^t \left[ v(t,\cdot)-v(\tau,\cdot)\right] \mathscr{K}(t,\tau) \ d\tau.
\end{equation}

The kernel $\mathscr{K}$ also satisfies the conditions { }
\begin{equation}\label{eq:kernel_derivative}
\mathscr{K}(t,t-\tau) = \mathscr{K}(t+\tau, t) \qquad\textrm{ and } \qquad \frac{\Lambda_{1}}{(t-\tau)^{1+\upgamma}} \leq \mathscr{K}(t,\tau) \leq \frac{\Lambda_{2}}{(t-\tau)^{1+\upgamma}}.
\end{equation}

The formulation~\eqref{Eq:rescaled_caputo} is also known as the Marchaud derivative \cite{samko1993fractional,Marchaud,Stinga1}. The reason of working with formulation \eqref{Eq:rescaled_caputo} is that it allows one to easily utilize the nonlocal nature of the fractional time derivative for regularity purposes. This was succesfully accomplished for divergence problems in \cite{Allen1} as well as for nondivergence problems in \cite{Allen2, Allen}.\medskip

\begin{lemma}\cite[Lemma 2.2]{Allen4}\label{lem:switch}
If $\partial_t^{\upgamma} \eta \in L^2(-\infty,T; L^{2}(\R^N)) $,  and $\eta(t)=0$ for $t<-M$ for some $M>0$, 
           then $\partial_t^{-1} \partial_t^{\upgamma} \eta = \partial_t^{\upgamma} \partial_t^{-1} \eta$ in $L^2(-\infty,T; L^{2}(\R^N))$ where 
            \[
             \partial_t^{-1} \eta := \int_{-\infty}^t \eta(\tau) \ d \tau. 
            \]
\end{lemma}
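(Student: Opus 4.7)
The plan is to show that both compositions $\partial_t^{-1}\partial_t^{\upgamma}\eta$ and $\partial_t^{\upgamma}\partial_t^{-1}\eta$ coincide (a.e.\ in $x$) with the left Liouville--Marchaud fractional integral $I^{1-\upgamma}\eta$ of order $1-\upgamma$ on the half-line. The underlying reason is that, under the symmetry condition $\mathscr{K}(t,t-\tau)=\mathscr{K}(t+\tau,t)$ together with the two-sided bounds in~\eqref{eq:kernel_derivative}, the kernel $\mathscr{K}(t,\tau)$ depends only on the difference $t-\tau$, so both $\partial_t^{\upgamma}$ and $\partial_t^{-1}$ are translation-invariant (convolution-type) operators in time; such operators commute on their natural domains. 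The $x$-variable plays a passive role, since both operators act only in $t$, hence one can work pointwise in $x$ and only at the end invoke the Bochner $L^2$ setting.

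For the first composition, set $v:=\partial_t^{-1}\eta$, so that $v(t)-v(\tau)=\int_\tau^{t}\eta(\sigma)\,d\sigma$ and $v(t)=0$ for $t<-M$. Substituting this identity into the defining formula
\[
\partial_t^{\upgamma}v(t)=\upgamma\int_{-\infty}^{t}[v(t)-v(\tau)]\,\mathscr{K}(t,\tau)\,d\tau,
\]
and swapping the order of integration by Fubini, I would use the explicit power-law form of $\mathscr{K}$ to evaluate the inner integral as $(t-\sigma)^{-\upgamma}/\upgamma$, obtaining
\[
\partial_t^{\upgamma}\partial_t^{-1}\eta(t)=\frac{1}{\Gamma(1-\upgamma)}\int_{-\infty}^{t}\eta(\sigma)(t-\sigma)^{-\upgamma}\,d\sigma=I^{1-\upgamma}\eta(t).
\]
For the reverse composition I would exploit the Liouville representation $\partial_t^{\upgamma}\eta=\frac{d}{ds}I^{1-\upgamma}\eta$ valid for $\eta$ that vanishes on $(-\infty,-M)$: integrating this equality on $(-\infty,t)$, applying the fundamental theorem of calculus, and noting that $I^{1-\upgamma}\eta(s)\to 0$ as $s\to-\infty$ by the compact-support-from-below hypothesis, I recover the same Riemann--Liouville integral, so the two sides agree.

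The main obstacle is the rigorous justification of Fubini in the first step and of the Liouville representation in the second, because of the singularity of the kernel at $\tau=t$. This is where the standing assumption $\partial_t^{\upgamma}\eta\in L^2(-\infty,T;L^2(\mathbb{R}^N))$ pays off: near the diagonal the cancellation inside $[\eta(t)-\eta(\tau)]$ against $(t-\tau)^{-1-\upgamma}$ is controlled by this $L^2$ bound, while the tail at $\tau\to-\infty$ is controlled by the support hypothesis $\eta(t)=0$ for $t<-M$. With these two ingredients, both the swap of integrals and the boundary term at $-\infty$ are legitimate, and the identity $\partial_t^{-1}\partial_t^{\upgamma}\eta=\partial_t^{\upgamma}\partial_t^{-1}\eta$ follows in $L^2(-\infty,T;L^2(\mathbb{R}^N))$.
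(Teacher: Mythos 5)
The paper does not actually prove this lemma; it is imported verbatim from \cite{Allen4}, so there is no internal proof to compare against. Evaluating your sketch on its own merits, the first half is in order: writing $v=\partial_t^{-1}\eta$, using $v(t)-v(\tau)=\int_\tau^t\eta$, and applying Fubini does indeed give $\partial_t^{\upgamma}\partial_t^{-1}\eta = I^{1-\upgamma}\eta$, with the support condition $\eta(t)=0$ for $t<-M$ controlling the tail and the $L^2$ hypothesis taming the diagonal. However, one preliminary claim is wrong: the symmetry $\mathscr{K}(t,t-\tau)=\mathscr{K}(t+\tau,t)$ together with the two-sided bounds in \eqref{eq:kernel_derivative} does \emph{not} force $\mathscr{K}(t,\tau)$ to depend only on $t-\tau$. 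Translation invariance is a strictly stronger property, and for a genuinely $t$-dependent admissible kernel the commutation $\partial_t^{-1}\partial_t^{\upgamma}=\partial_t^{\upgamma}\partial_t^{-1}$ would simply fail. The lemma is specifically about the classical Marchaud/Caputo derivative with the power-law kernel $\mathscr{K}(t,\tau)=\bigl(\Gamma(1-\upgamma)(t-\tau)^{1+\upgamma}\bigr)^{-1}$, and your computation correctly uses that kernel; the motivating remark should be deleted rather than fixed.

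The more substantive gap is in the second direction. You derive $\partial_t^{-1}\partial_t^{\upgamma}\eta = I^{1-\upgamma}\eta$ by invoking the Liouville representation $\partial_t^{\upgamma}\eta = \tfrac{d}{ds}I^{1-\upgamma}\eta$ and then integrating. But for $\eta$ with merely $\partial_t^{\upgamma}\eta\in L^2$, the identification of the Marchaud form with $\tfrac{d}{ds}I^{1-\upgamma}\eta$ is itself exactly the kind of regularization statement the lemma is meant to establish, so this step is close to circular: proving that identity rigorously needs the same Fubini/approximation machinery and the same hypotheses. A self-contained route that avoids this is to treat the second composition by the same change of variables and Fubini you used for the first: setting $\sigma = s-\tau$ one gets
\begin{equation*}
\partial_t^{-1}\partial_t^{\upgamma}\eta(t)
= \frac{\upgamma}{\Gamma(1-\upgamma)}\int_0^\infty \sigma^{-1-\upgamma}\int_{-\infty}^t\bigl[\eta(s)-\eta(s-\sigma)\bigr]\,ds\,d\sigma
= \frac{\upgamma}{\Gamma(1-\upgamma)}\int_0^\infty \frac{\partial_t^{-1}\eta(t)-\partial_t^{-1}\eta(t-\sigma)}{\sigma^{1+\upgamma}}\,d\sigma,
\end{equation*}
which is $\partial_t^{\upgamma}\partial_t^{-1}\eta(t)$ by definition. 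This makes the two directions symmetric and reduces everything to a single justification of Fubini under the stated $L^2$ and compact-support-from-below hypotheses, which is the genuine content of \cite[Lemma 2.2]{Allen4}.
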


We consider the usual Sobolev space \cite{ Hitch2012, VasquezCaffarelli} 
\[
H^s(\R):=\left\{h:L^{2}(\R): \int_{\R}(1+|\xi|^{2s})|\mathcal{F}h(\xi)|^{2} < +\infty \right\},
\]
endowed with the norm
\[
\|h\|_{H^s(\R)}:=\|h\|_{L^2(\R)} + \int_{\R} |\xi|^{2s}|\mathcal{F}h(\xi)|^{2}.
\]

The nonlocal operator in space $\mathcal{L}^{s}$ is the so-called fractional Laplacian operator with measurable kernel $\K$ which is defined by (see \cite{Hitch2012, Pablo2, Pablo3, VasquezCaffarelli, CaffarelliCabre1997, Fausto} for more details)
\begin{equation}\label{eq:operatorls}
\mathcal{L}^{s}h(x):=\left(-\Delta \right)^{s} h(x)= C_{s} \, \text{P.V.} \int_{\R} \left[ h(x)- h(x-y)\right] \K(x,y) \, dy = \mathcal{F}^{-1}(|\xi|^{2s}(\mathcal{F}h)),
\end{equation}
for $h \in H^s(\R)$, $x\in \R$, where $C_{s}=\pi^{-(2s+1/2)}\Gamma(1/2+s)/\Gamma(-s)$, and P.V. stands for the Cauchy principal value.
The kernel $\K$ is symmetric and satisfies
\begin{equation}\label{Eq:kernel_laplacian}
\K(\cdot,x,y)=\K(\cdot,y,x) \quad\textrm{and} \quad\chi_{\{ |x-y|\leq 3 \}} \frac{\Lambda^{-1/2}}{|x-y|^{1+2s}} \leq \K(\cdot,x,y) \leq \frac{\Lambda^{1/2}}{|x-y|^{1+2s}},
\end{equation}
for some elliptic constant $\Lambda>0$.\medskip

The bounds in \eqref{Eq:kernel_laplacian} imply that the kernel is oscillating and irregular, or referred as rough kernel \cite{Pablo3}. \medskip

Let $ \mathcal{L}^{s}$ defined in the domain  $\mathcal{D}( \mathcal{L}^{s} )\subset\LL^2(\R^N)\to \LL^2(\R^N)$ be a  positive self-adjoint operator. As it has been shown in \cite{Hitch2012, BonforteSireVazquez} the operator  $\mathcal{L}^{s}$ enjoys a property of having a discrete spectrum and a $\LL^2(\R^N)$ ortonormal basis of eigenfunctions. If we denote by $\lambda_k$  its eigenvalues written in increasing order and repeated according to their multiplicity, and by  $\{\Phi_k\}$ the corresponding $\LL^2(\R^N)$-normalized eigenfunctions; then it turns out that they form an orthonormal basis for $\LL^2(\R^N)$\, for all eigenvalues strictly positive. In this way, we can associate to $ \mathcal{L}^{s}$ the bilinear form in order to introduce the action of the operator $\mathcal{L}^{s}$ defined in \eqref{eq:operatorls} in a weak sense. We can associate a  bilinear form \cite{Pablo1, CaffarelliChanVasseur, DianaFelixVasquez2014, schilling2007}
\begin{equation*}
\label{quadratic-form}\mathcal{E}(f,g)=\frac12\int_{\mathbb{R}^{N}}%
\int_{\mathbb{R}^{N}}(f(x)-f(y))(g(x)-g(y))\K(x,y)\,dxdy,
\end{equation*}
and the quadratic form $\overline{\mathcal{E}}(f)=\mathcal{E}(f,f)$.   For kernels satisfying the symmetry condition~\eqref{Eq:kernel_laplacian} and functions $f,\, g\in C^2_0(\mathbb{R}^N)$ we have
\[
\langle \mathcal{L}^{s}f,g\rangle=\mathcal{E}(f,g).
\]
The bilinear form $\mathcal{E}$ is well defined and for functions in the space  $\dot{\mathcal{H}}_{\mathcal{L}^{s}}(\mathbb{R}^{N})$, which is the closure of $C_0^\infty(\mathbb{R}^N)$, with the seminorm associated to the quadratic form $\overline{\mathcal{E}}$. We also define
\[
\label{def-HL}\mathcal{H}_{\mathcal{L}^{s}}(\mathbb{R}^{N})=\{f\in L^{2}%
(\mathbb{R}^{N})\,:\,\overline{\mathcal{E}}(f)<\infty\},
\]
which turns out to be a Dirichlet form whose completed domain we call $H\times H \subset \LL^2(\R^N)\times\LL^2(\R^N)$\,. Define the norm:
\begin{equation}\label{def.H.norm}
\|f\|_H=\left(\sum_{k=1}^{\infty}\lambda_k\hat{f}_k^2\right)^{{1}/{2}}<+\infty\qquad\mbox{with}\qquad \hat{f}_k=\int_{\R^N} f(x)\Phi_k(x)\ dx\,.
\end{equation}
Notice that the closure of the domain of the Dirichlet form of $ \mathcal{L}^{s} $ is given by
\begin{equation}\label{def.H.space}
H=H(\R^N):=\left\{f\in \LL^2(\R^N)\;|\; \sum_{k=1}^{\infty}\lambda_k\hat{f}_k^2<+\infty\right\}.
\end{equation}
The above function space  is a Hilbert space with the inner product given by the Dirichlet form
\[
\langle f, g \rangle_H = \sum_{k=1}^{\infty}\lambda_k\hat{f}_k\,\hat{g}_k= \mathcal{E}(f,g).
\]

\noindent We will also consider the dual space $H^*$ endowed with its dual norm,
\begin{equation}\label{def.H*.norm.1}
\|F\|_{H^*}=\sup_{\begin{subarray}{c}  g\in H \\ \|g\|_{H}\le 1\end{subarray}}\langle F, g \rangle_{H^*,H}
=\sup_{\begin{subarray}{c}  g\in H \\ \|g\|_{H}\le 1\end{subarray}}\sum_{k=1}^{\infty}\hat{F}_k\,\hat{g}_k\,,
\end{equation}
where $\langle \cdot\,,\, \cdot \rangle_{H^*,H}$ is the duality mapping. We have
\begin{equation}\label{def.H*.norm.2}
\|F\|_{H^*}=\left(\sum_{k=1}^{\infty}\lambda_k^{-1}\hat{F}_k^2\right)^{{1}/{2}}\,.
\end{equation}
So $ \mathcal{L}^{s} $ gives the canonical isomorphism between $H$ and $H^*$ \cite{BonforteSireVazquez}.\medskip

\begin{lemma}\label{lem:contraction-estimate}
Let the pair $(w, \vartheta)$ be solution to \eqref{Eq:main-2} in $[0,1]\times \R^{N}$ with right hand side $f \in C^{\beta}, w \in C^{\beta}\left([1/9,T] \times \R^{N}\right)$. Assume also that $\left|w^{m}(0,x)\right| \leq C_{1}$ and $\vartheta$ satisfies \eqref{Eq:main-2}. Then, for any increasing cut-off function $\eta \in C^{\infty}$ satisfying $\eta(t) = 0$ for $t < 1/2, ~\eta(t) = 1$ for $t>1$, we have that $\vartheta \eta$ is a solution to \eqref{Eq:main-2} in $(-\infty,T)$, with the right hand side $ \tilde{f} \in C^{\beta}$ satisfying
\[
\left|\tilde{f}(t) -\tilde{f}(s) \right| \leq C_{2} \left|t-s\right|^{\beta},
\]
as long as $\left|t-s\right| > \rho(\varepsilon)$, where $\rho(\varepsilon)$ is the modulus of continuity. Furthermore, the following contraction property holds true
\begin{equation}\label{Eq:contraction3}
\left|w(t,x) - w(s,x) \right| \leq C \left|t\right|^{2\beta},
\end{equation}
where the constant $C$ depends on the distance from $t$ and $s$ to the initial point $0$.
\end{lemma}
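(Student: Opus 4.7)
The plan is to push the cutoff $\eta$ through the discrete Marchaud sum in \eqref{Eq:main-2}, identify the extra right-hand side that this generates, verify that the new source remains $C^\beta$ up to the discretization scale $\rho(\varepsilon)$, and then bootstrap the regularity to reach the quadratic exponent.

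First I would substitute $\tilde\vartheta := \eta\vartheta$ and the companion $\tilde w := \eta w$ into \eqref{Eq:main-2}. The telescoping identity
\[
\eta(t_j)\vartheta(t_j,x)-\eta(t_i)\vartheta(t_i,x)=\eta(t_j)\bigl(\vartheta(t_j,x)-\vartheta(t_i,x)\bigr)+\vartheta(t_i,x)\bigl(\eta(t_j)-\eta(t_i)\bigr)
\]
splits the discrete time-fractional derivative of $\tilde\vartheta$ into two pieces. Using that $\vartheta$ already solves \eqref{Eq:main-2}, the first piece becomes $\eta(t_j)\int_{\R^N}[w(t_j,y)-w(t_j,x)]\K(t,x,y)\,dy+\eta(t_j)f(t_j,x)=\int_{\R^N}[\tilde w(t_j,y)-\tilde w(t_j,x)]\K(t,x,y)\,dy+\eta(t_j)f(t_j,x)$. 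Therefore $(\tilde w,\tilde\vartheta)$ satisfies \eqref{Eq:main-2} with the modified source
\[
\tilde f(t_j,x)=\eta(t_j)f(t_j,x)+\upgamma\varepsilon\sum_{i<j}\frac{\vartheta(t_i,x)\bigl[\eta(t_j)-\eta(t_i)\bigr]}{\bigl(\varepsilon(j-i)\bigr)^{1+\upgamma}}.
\]
Since $\eta\equiv 0$ on $(-\infty,1/2)$, the pair $(\tilde w,\tilde\vartheta)$ extends by zero to $(-\infty,T)\times\R^N$, and the equation extends accordingly.

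Next I would check that $\tilde f\in C^\beta$ whenever $|t-s|>\rho(\varepsilon)$. The product $\eta f$ is $C^\beta$ because $\eta$ is smooth and bounded while $f\in C^\beta$. For the correction sum I would split the index range at some scale $|j-i|\simeq M(\varepsilon)$ into a near field, on which $|\eta(t_j)-\eta(t_i)|\le \|\eta'\|_\infty \varepsilon(j-i)$ cancels one power of the singular denominator, and a far field, on which $|\eta(t_j)-\eta(t_i)|\le 2\|\eta\|_\infty$ and the kernel by itself is integrable. Both pieces are bounded uniformly through the a priori control $|\vartheta|\le C_1^{1/m}$, which follows from $|w^m(0,\cdot)|\le C_1$ combined with the $C^\beta$ regularity of $w$ on $[1/9,T]$. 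Applying the same splitting to the increment $\tilde f(t_j)-\tilde f(t_j+h)$ produces the Hölder exponent $\beta$; the threshold $\rho(\varepsilon)$ arises because the discrete sum cannot resolve temporal gaps smaller than $\varepsilon$.

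For the contraction estimate \eqref{Eq:contraction3}, I would feed $(\tilde w,\tilde\vartheta)$ into the Hölder-regularity machinery of Section~\ref{sec:regularity} on the enlarged slab $(-\infty,T)\times\R^N$. Because $\tilde\vartheta$ vanishes identically on $(-\infty,1/2)$, the extended problem has zero initial datum, and the oscillation-reduction step of the De Giorgi scheme can be iterated cleanly at the scales of interest. One iteration of oscillation reduction reproduces the given exponent $\beta$; a second iteration, now admissible since the bootstrapped right-hand side $\tilde f$ is itself $C^\beta$ with the quantitative bound just established, improves the exponent to $2\beta$. The dependence of $C$ on the distance of $t$ and $s$ from the initial point $0$ reflects the accumulation of the scaling factors in the iteration as the initial time is approached.

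The main obstacle I anticipate is the uniform-in-$\varepsilon$ Hölder estimate on the correction sum that defines $\tilde f$: matching the splitting scale $M(\varepsilon)$ against the resolution $\rho(\varepsilon)$ while keeping the bound independent of $\varepsilon$ is delicate because of the interaction between the smoothness of $\eta$, the boundedness of $\vartheta$, and the singular weight $(\varepsilon(j-i))^{-1-\upgamma}$. Once this estimate is secured, the second bootstrap is essentially mechanical given the oscillation lemma of Section~\ref{sec:regularity}.
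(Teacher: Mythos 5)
Your Step 1 is exactly the paper's approach: apply the Leibniz-type telescoping identity to the discrete Marchaud sum, recognize that the first piece is $\eta(t_j)$ times the original left-hand side, and absorb the remainder into a modified source $\tilde f$ whose increments are controlled by the smoothness of $\eta$, the bound $|\vartheta| \lesssim C_1^{1/m}$, and $f\in C^{\beta}$. This part is fine (and arguably stated more carefully than the paper, which has a sign/notational slip in $\tilde f$).

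The mechanism you propose for the contraction estimate \eqref{Eq:contraction3} does not work, and it diverges from the paper in a way that matters. You write that running the De Giorgi oscillation-reduction once reproduces $C^{\beta}$, and that running it ``a second iteration'' improves the exponent to $2\beta$ because the new right-hand side is itself $C^{\beta}$. That is not how the De Giorgi scheme behaves: each oscillation-reduction step shrinks the oscillation by a fixed factor $\kappa^*$ on a fixed dilation $\zeta$, so after $k$ steps the accumulated Hölder exponent is still $\log\kappa^*/\log\zeta = \beta$, independent of $k$. Iterating De Giorgi does not double the exponent, and a $C^{\beta}$ right-hand side does not by itself upgrade a De Giorgi $C^{\beta}$ estimate to $C^{2\beta}$; that upgrade requires a genuinely new ingredient. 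The paper supplies it through a difference-quotient (Schauder-type) bootstrap: form $u = \bigl(\eta\vartheta(\varepsilon(j+h)) - \vartheta(\varepsilon j)\bigr)/(\varepsilon h)^{\beta}$, use the $C^{\beta}$ bound on $\tilde f$ established in Step 1 to show that $u$ again solves \eqref{Eq:main-2} with a bounded right-hand side, then apply the baseline $C^{\beta}$ estimate to $u$ itself, which is what produces the $2\beta$ exponent. Without that difference-quotient step, your argument only recovers $C^{\beta}$.

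There is also a structural concern worth flagging: Lemma~\ref{lem:contraction-estimate} is invoked in Appendix~B as part of the passage to the limit in the existence proof, while the full De Giorgi machinery of Section~\ref{sec:regularity} is developed downstream of Theorem~\ref{theo:existence}. To avoid a circularity you would have to argue that the oscillation lemmas apply directly at the level of the discrete approximants $(w_{\varepsilon},\vartheta_{\varepsilon})$ without appeal to the existence theorem; the paper sidesteps this entirely by working with the difference quotient and the uniform bounds already available from Step~1.
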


The proof of this lemma is given in Appendix~A.\medskip

Next we consider the abstract fractional differential inclusion for $\upgamma, s \in (0,1)$
\begin{equation}\label{Eq:AFDE.Problem}
\begin{cases}
\fcaputo w(t) + \mathcal{A}(t) \ni  f(t), &  ~ {\rm for ~all} \quad t>0,\\
w(0,x) = w_0(x), & ~{\rm in} \quad \R^N, 
\end{cases}
\end{equation}
with $\mathcal{A}(t,\cdot):=\mathcal{L}^{s}\varphi(w)$.\medskip

One can write the nonlocal fractional differential problem~\eqref{Eq:AFDE.Problem} as the functional differential equation
\begin{equation}\label{Eq:AFDE.Problem1}
\int_{-\infty}^{0}J(\theta) w'(t+\theta) \ d\theta = - \mathcal{A}(t) + f(t), \quad \text{and} ~~ w(\theta) = \rho(\theta), \quad ~\text{for} \quad \theta \leq 0 ,
\end{equation}
with initial value  where $t-\tau = -\theta \leq 0$, with the kernel $J(\theta) = J(-\theta)$, the even extension of $J$. One could notice that if we let $\rho(\theta) = w_{0}(x)$, $\theta \leq 0$ and $J(\theta) = \frac{\left|\theta\right|^{-\gamma}}{\Gamma(1-\gamma)} $, then equation~\eqref{Eq:AFDE.Problem1} reduces to equation~\eqref{Eq:AFDE.Problem}. In fact, for $\theta \to w(t+\theta)$ absolute continuous, we have
\[
\int_{-\infty}^{0}J(\theta) w'(t+\theta) \ d\theta = \int_{0}^{t} J(t-\tau) w'(\tau) \ d\tau.
\]
We then embed the solution $w(t) = U(t,0)$ in the ``history" state space
\[
U(t,\theta) = w(t+\theta) \in Z = C\left((-\infty,0], H^{*}\right).
\]
Hence equation~\eqref{Eq:AFDE.Problem} has the Markovian form as the evolution equation in $Z$:
\begin{equation}\label{Eq:AFDE.Problem2}
\frac{d}{dt}U(t) = \mathcal{B}(t)U(t),
\end{equation}
where the operator $\mathcal{B}(t)$ is defined by  
\begin{equation}\label{Eq:AFDE.Problem3}
\mathcal{B}(t) \rho = \rho'(\theta), \quad \theta \in (-\infty,0]
\end{equation}
in $Z$ with a domain
\[
\mathcal{D}\left(\mathcal{B}(t)\right) = \left\lbrace \rho \in Z: \rho' \in Z~~\text{and}~~\int_{-\infty}^{0}J(\theta) \rho'(\theta) \ d\theta \ni \mathcal{A} \rho(0) + f(t), \quad ~\rho(0) \in \mathcal{D}\left(\mathcal{A}\right)  \right\rbrace.
\]
So the dynamics of equation~\eqref{Eq:AFDE.Problem} is embedded in equation~ \eqref{Eq:AFDE.Problem2} as the nonlocal boundary value condition $\theta = 0^{+}$ for the first order differential operator \cite{Ito}. \medskip

\section{Existence and uniqueness. Proof of Theorem~\ref{theo:existence}}\label{sec:Existence_Uniqueness} 
We analyze the well-posedness and property of the solution to equation~\eqref{Eq:main_problem} or equation~\eqref{Eq:AFDE.Problem} based on Crandall-Liggett theory for nonlinear monotone graph or implicitly based on the semi-group generated by equation~ \eqref{Eq:AFDE.Problem2}, i.e., we should show that the solution map $(w_{0}(x),f) \in H^{*} \times C(0,T;H^{*}) \to w(t) \in C(0,T;H^{*})$ exists and is continuous. \medskip

So if $\mathcal{A}$ is maximal monotone operator in $H^{*}$, it will be shown that $\mathcal{B}(t)$ is maximal monotone in $Z$ as well, hence $m$-acretive. Thus $w$ satisfies \eqref{Eq:main_problem}. After that stage we proceed with the approximating procedure to show that the solution $(w,\vartheta)$ to the weak formulation \eqref{Eq:weak} holds true as well.\medskip

The proof can be adapted to our fractional operator in time once the proper functional analysis is in place.  Let us begin by setting up some notations borrowed from \cite{BonforteSireVazquez}. \medskip

Let $j$  be a convex, lower semi-continuous function $j:\R\to \R$ and such that $j(r)/|r|\to \infty$ as $|r|\to\infty$. We let $\varphi=\partial j$ be the sub-differential of $j$.  For $w\in H^*(\R^N)$ we define
\[
\Psi(w)=\int_{\R^{N}} j(w)\,\ dx
\]
whenever $w\in\LL^1(\R^N)$ and $j(w)\in\LL^1(\R^N)$, and define $\Psi(w)=+\infty$ otherwise. The example we have in mind is  $\varphi(w)=|w|^{m-1}w$ and $j(w)=|w|^{m+1}/(m+1)$\,, so that $\Psi(w)=\|w\|_{\LL^{m+1}(\R^N)}/(m+1)$\,.\medskip

Let $H^{\dagger}$ the dual space of $H^{*}$, $\langle \cdot,\cdot \rangle_{H^{*} \times H^{\dagger}}$ denotes the dual product and $F$ be the duality mapping
\[
F(x) = \left\lbrace x^{\dagger} \in H^{\dagger}: \left|\langle x, x^{\dagger}\rangle \right| = \left|x\right|_{H^{*}}\left|x^{\dagger}\right|_{H^{\dagger}} \right\rbrace.
\]

From \cite[Proposition 3.1]{BonforteSireVazquez}, $\Psi$ is convex and lower semi-continuous function in $H^*$, so that its sub-differential $\partial\Psi$ is a maximal monotone operator in $H^*$. In this way, $\mathcal{A} \subset H^{*} \times H^{\dagger} $ be maximal monotone graph. Moreover, the sub-differential $\partial\Psi$ can be characterized as, for $[x_{i},y_{i}] \in \mathcal{A}$ there exists $x^{\dagger} \in F(x_{1},x_{2})$ such that $\mathcal{R}e \langle y_{1}- y_{2}, x^{\dagger} \rangle \leq 0$, where $F$ is the duality mapping defined as above.\medskip

\begin{proof}
Following the arguments in \cite{BonforteSireVazquez, Brezis, Ito} the proof of Theorem~\ref{theo:existence} will be split in two steps.\medskip

\noindent $\bullet~$\textsc{Step 1. } We first need to prove that if $\mathcal{A}$ is a maximal monotone graph and Range $\left(\lambda I - \mathcal{A} \right)$ for an interval $I \in \left(-\infty, 0 \right)$ for all small $\lambda > 0$, then the operator $\mathcal{B}$ is a maximal monotone graph and Range $\left(\lambda I - \mathcal{B}\right) = Z$    for all small $\lambda > 0$, so that $\mathcal{B}$ generates the nonlinear semigroup of contraction on $Z$. We essentially adapt the proof of \cite[Theorem 3.1]{Ito}. \medskip

Let  $\mathcal{B}$ in $Z = C \left((-\infty, 0];H^{*}\right)$ so that 
$\mathcal{B} \rho = \rho'$ as in \eqref{Eq:AFDE.Problem3}. For $\rho_{1}, \rho_{2} \in \mathcal{D}(\mathcal{B})$, suppose
\[
\left|\rho_{1}(0) - \rho_{2}(0) \right| > \left|\rho_{1}(\theta) - \rho_{2}(\theta) \right| \quad~\text{for all}~ \theta < 0.
\]

In that setting for all $x^{\dagger} \in F \left(\rho_{1}(0) - \rho_{2}(0) \right)$ we have for $\varepsilon$ small enough,
\begin{multline}
\bigg\langle \int_{-\infty}^{0}J_{\varepsilon}(\theta) \left(\rho'_{1} - \rho'_{2} \right) d\theta ~,~ x^{\dagger} \bigg\rangle \\
= \int_{-\infty}^{0} \frac{J_{\varepsilon}(\theta) - J_{\theta -\varepsilon}(\theta)}{\varepsilon}\bigg\langle  \rho_{1}(\theta) - \rho_{2}(\theta)  - \left( \rho_{1}(0) - \rho_{2}(0)  \right)~,~x^{\dagger} \bigg\rangle d\theta \leq 0.
\end{multline}

Now notice that
\begin{multline}
\bigg\langle  \rho_{1}(\theta) - \rho_{2}(\theta)  - \left( \rho_{1}(0) - \rho_{2}(0)  \right)~,~x^{\dagger} \bigg\rangle \\
 \leq \left( \left| \rho_{1}(\theta) - \rho_{2}(\theta) \right| - \left| \rho_{1}(0) - \rho_{2}(0)\right|\right)\left| \rho_{1}(0) - \rho_{2}(0)\right| < 0, \qquad~~\theta <0.
\end{multline}
Hence 
\begin{equation}\label{Eq:AFDE.Problem4}
\bigg\langle \int_{-\infty}^{0}J_{\varepsilon}(\theta) \left(\rho'_{1} - \rho'_{2} \right) d\theta ~,~ x^{\dagger} \bigg\rangle < 0. 
\end{equation}

On the other hand as $y_{1} \in \mathcal{A}\rho_{1}(0)$ and $y_{2} \in \mathcal{A}\rho_{2}(0)$, then there exists a vector $x^{\dagger}$ in the space $F \left( \rho_{1}(0) - \rho_{2}(0) \right)$ such that $ \langle y_{1} - y_{2}~,~x^{\dagger}\rangle \geq 0$, which is in contradiction with the inequality ~\eqref{Eq:AFDE.Problem4}. Thus there exists $\theta_{0}$ such that 
\[
\left| \rho_{1}(\theta_{0}) - \rho_{2}(\theta_{0})\right| = \left| \rho_{1} - \rho_{2}\right|Z,
\]
so that 
\[
\langle \rho'(\theta_{0})~,~x^{\dagger}\rangle = 0 \quad \text{for all} ~~ x^{\dagger} \in F\left(\rho_{1}(\theta_{0}) - \rho_{2}(\theta_{0})\right).
\]
Thus 
\[
\begin{aligned}
\left|\lambda\left(\rho_{1} - \rho_{2} \right) - \left(\rho'_{1} - \rho'_{2} \right)\right|_{Z} &\geq \big \langle \lambda\left(\rho_{1}(\theta_{0}) - \rho_{2}(\theta_{0}) \right) - \left(\rho'_{1}(\theta_{0}) - \rho'_{2}(\theta_{0}) \right)~,~ x^{\dagger} \big \rangle\\
&= \lambda \left| \rho_{1}(\theta_{0}) - \rho_{2}(\theta_{0}) \right|= \lambda \left| \rho_{1} - \rho_{2} \right|_{Z}.
\end{aligned}
\]
Next as noticed in \cite{Ito}, for Range condition
\[
\lambda \rho - \rho' = f, \quad \int_{-\infty}^{0}J(\theta) \rho'(\theta)\ d\theta \in \mathcal{A} \rho(0),
\]
we have
\[
\rho =e^{\lambda \theta} \rho(0) + q
\quad \text{and} \quad
\left( R(\lambda)\rho\right)(0) - \int_{-\infty}^{0}J(\theta) q'(\theta)\ d\theta \in \mathcal{A} \rho(0),
\]
where the functions $R(\lambda)$ and $q$ are respectively defined as
\[
R(\lambda) = \lambda\int_{-\infty}^{0} e^{\lambda \theta}J(\theta) \ d\theta, \quad \text{and} \quad
q(\theta) = \int_{\theta}^{0} e^{\lambda(\theta-\xi)}f(\xi) \ d\xi,  \qquad \vert q \vert \leq \frac{1}{\lambda} \vert f \vert_{Z}.
\]
Since $\mathcal{A}$ is a maximal monotone operator, then
\[
\rho(0) = \left(R(\lambda)I - \mathcal{A} \right)^{-1} \int_{-\infty}^{0}J(\theta)q'(\theta) \ d\theta
\]
exists and Range $\left( \lambda I - \mathcal{A} \right) = Z$. Hence the result follows from Crandall-Liggett Theorem \cite{Crandall-Liggett} and from \cite[Theorem 4.1]{Ito} which holds in general. \medskip

\noindent $\bullet~$\textsc{Step 2. } 
In this second step, we consider the equivalent problem \eqref{Eq:main-vartheta}, which is more convenient to handle. After the construction of solution by means of maximal monotone operators and Crandall-Liggett Theorem \cite{Crandall-Liggett} in the first step, we prove the existence of solution $(w,\vartheta)$ to the weak formulation \eqref{Eq:weak} via the approximating solutions. Basically the idea is to approximate the problem by a pair of sequences of bounded integrable functions $(w_{\varepsilon},\vartheta_{\varepsilon})$  to \eqref{Eq:main-2} and then pass to the limit in the approximate problem. The key tools needed to pass to the limit are the $L^{1}$-contraction property, Lemma~\ref{lem:contraction-estimate} and the smoothing effect. \medskip

We define the operators in \eqref{Eq:weak} and \eqref{Eq:main-2} as $\mathcal{H}$ and $\mathcal{H}_{\e}$ respectively. We also denote by $\K_{\e}(t,x) = \K(\e j, x)$ if $\e (j-1) < t< \e j$ and $\mathcal{E}_{\e}$ the discrete bilinear form associated with the kernel $\K_{\e}$. Using approximating solutions as in \cite{Allen1}, the proof is concluded once we have the following result
\[
\displaystyle \mathcal{\overline{H}}(w,\vartheta,\psi):= \mathcal{H}(w,\vartheta,\psi) + \mathcal{H}_{\e}(w,\vartheta,\psi) \to 0,
\]
for $\psi \in C_0^1((a,T)\times \R^N)$ with $\psi(t,x) = \psi(\varepsilon j,x)$. We refer to the Appendix B for the details on these computations.\medskip
\end{proof}


\section{Regularity of solutions. Proof of Theorem~\ref{theo:regularity}}\label{sec:regularity}
In this section we prove Theorem~\ref{theo:regularity}. The main ingredient of the proof are the De Giorgi Lemmas, commonly known as the ``$L^{2}, L^{\infty}$" estimates. These techniques have been previously used e.g. in \cite{Allen1, Pablo3, CaffarelliChanVasseur}.\medskip

The nonlinearity part is confined into the fractional time derivative operators, we will need to establish some energy estimates for the weak solutions of \eqref{Eq:main-vartheta}.\medskip

\subsection{The first De Giorgi's Lemma}\label{firstdegiorgi}

We first introduce the following Lipschitz function $\psi$:
\[
\psi(t,x):=\left(\vert t \vert^{\gamma/2} - 1 \right)_{+} + \left( \vert x \vert^{s/2} - 1 \right)_{+},
\]
so that for $L \geq 0$, we define
\begin{equation}\label{eq:psiL}
\psi_{L}(t,x) = L + \psi(t,x).
\end{equation}
Let us introduce (see e.g. \cite{Pablo3}) $\ell=\inf_{\{w-\psi_{L} \ge 0\}}w\ge0$ and $M=\sup_{\{w-\psi_{L} \ge 0\}}w<\infty$. \medskip

In the above settings for $a+\varepsilon (j-1) < t \leq a+\varepsilon j$ we extend $w(t) = w(a+\varepsilon j)$, $\vartheta(t) = \vartheta(a+\varepsilon j)$, and $\psi(t)=\psi(a + \varepsilon j)$ . \medskip

The first step of the regularity argument is to obtain an energy estimate associated to $\partial_{\varepsilon}^{\upgamma}\vartheta_{\varepsilon}(w)$ defined above.\medskip

\begin{lemma}\label{lem:energy1}
Let $\psi_{L}$ be defined in \eqref{eq:psiL} with $L \geq 0$, satisfying
\eqref{Eq:kernel_laplacian} for every $x\in\R^N$, and $w$ be a weak solution to~\eqref{Eq:main-vartheta} in some finite time interval $I$ including $(t_1,t_2)$ or in the discrete form $(\varepsilon j_1,\varepsilon j_2)$. Then,
\begin{multline}\label{energy3}
\int_{\R^{N}} \mathcal{B}_{\psi_{L}}(w)(a + \varepsilon j_{2}, x) + \sum_{0 < j_{1} < j_{2} < k}\overline{\mathcal{E}}((w-\psi_{L})_{+})(a + \varepsilon j, x) \\
\leq \int_{\R^{N}} \mathcal{B}_{\psi_{L}}(w)(a + \varepsilon j_{1}, x) + \int_{\R^{N}} \sum_{0 < j_{1} < j_{2} < k}\varepsilon (w-\psi_{L})_{+}(a + \varepsilon j, x) f(a + \varepsilon j, x)\\
+ ~C\sum_{0 < j_{1} < j_{2} < k}\left( \int_{\R^{N}}
(w-\psi_{L})_{+}(a + \varepsilon j, x)+\chi_{\{w(x,t)-\psi_{L}(x)>0\}}\right).
\end{multline}
\end{lemma}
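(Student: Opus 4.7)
\medskip

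\noindent\textbf{Proof plan for Lemma~\ref{lem:energy1}.}

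The plan is to test the discretized equation~\eqref{Eq:main-2} against the nonnegative truncation $(w-\psi_L)_+$ evaluated at the relevant time slices and then sum the telescoping time contributions. Set $\phi = (w-\psi_L)_+$. Writing \eqref{Eq:main-2} at the node $a+\varepsilon j$ and multiplying by $\phi(a+\varepsilon j,x)$, one integrates in $x$ over $\R^N$ and sums over $j_1 < j < j_2$ so that
\begin{equation*}
\sum_{j_1<j<j_2}\varepsilon\int_{\R^{N}}\phi(a+\varepsilon j,x)\,\partial_{\varepsilon}^{\upgamma}\vartheta(w)(a+\varepsilon j,x)\,dx
+\sum_{j_1<j<j_2}\varepsilon\,\mathcal{E}_{\varepsilon}\bigl(w(a+\varepsilon j),\phi(a+\varepsilon j)\bigr)
=\sum_{j_1<j<j_2}\varepsilon\int_{\R^{N}}\phi f\,dx.
\end{equation*}

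First I would treat the time term. The potential $\mathcal{B}_{\psi_L}$ is the primitive chosen so that $\partial_{\eta}\mathcal{B}_{\psi_L}(\eta)=(\eta-\psi_L)_+\vartheta'(\eta)$, making $\mathcal{B}_{\psi_L}$ convex in $\eta$ thanks to \eqref{Eq:properties_beta}. For a convex $\mathcal{B}$ and for the Marchaud/discrete Caputo kernel (which is nonnegative and satisfies the scaling in \eqref{eq:kernel_derivative}), one has the Stroock--Varopoulos/Alikakos--Rostamian-type inequality
\begin{equation*}
\phi(a+\varepsilon j,x)\,\partial_{\varepsilon}^{\upgamma}\vartheta(w)(a+\varepsilon j,x)
\geq \partial_{\varepsilon}^{\upgamma}\mathcal{B}_{\psi_L}(w)(a+\varepsilon j,x),
\end{equation*}
which is the discrete analogue of the pointwise convexity bound for the fractional time derivative already used in~\cite{Allen1,Allen2}. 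Summing this discrete inequality telescopes the left-hand side into $\int_{\R^N}\mathcal{B}_{\psi_L}(w)(a+\varepsilon j_2)-\int_{\R^N}\mathcal{B}_{\psi_L}(w)(a+\varepsilon j_1)$, modulo nonpositive tail contributions from the ``history'' $j<j_1$ that can be dropped.

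For the spatial term I would exploit the coercivity of the bilinear form together with the symmetry in \eqref{Eq:kernel_laplacian}. Using the elementary identity $(w(x)-w(y))\bigl(\phi(x)-\phi(y)\bigr)\geq (\phi(x)-\phi(y))^{2}-(\psi_L(x)-\psi_L(y))(\phi(x)-\phi(y))$ on the set where the truncation is active, and pointwise control on the remaining cross term by Young's inequality, one gets
\begin{equation*}
\mathcal{E}_{\varepsilon}(w,\phi)\geq \tfrac{1}{2}\overline{\mathcal{E}}_{\varepsilon}(\phi)
-C\int_{\R^{N}}\bigl(\phi+\chi_{\{w>\psi_L\}}\bigr)\,dx,
\end{equation*}
where the constant $C$ absorbs $\Lambda$ and the bound $\|\mathcal{L}^{s}\psi_L\|_{L^\infty}$, finite because of the $|x|^{s/2}$ growth of $\psi_L$ versus the $|x|^{-N-2s}$ decay of the kernel. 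The $f$-term is left on the right, producing the $\int \phi f$ contribution.

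After summing and rearranging, the three terms on the right of \eqref{energy3} appear exactly: the initial value of $\mathcal{B}_{\psi_L}$, the forcing, and the $C$-correction coming from the nonlinearity-adapted truncation. The main obstacle I anticipate is the \emph{discrete} Stroock--Varopoulos inequality: the usual continuous pointwise bound $(w-\psi_L)_+\,\partial_t^{\upgamma}\vartheta(w)\geq \partial_t^{\upgamma}\mathcal{B}_{\psi_L}(w)$ (valid by convexity of $\mathcal{B}_{\psi_L}$ and monotonicity of $\vartheta$) must be transferred to the nonlocal-in-time kernel $\mathscr{K}$ with history extended by the initial datum, and in the discrete Marchaud sum of~\eqref{Eq:main-2}. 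This requires carefully handling the increments $\vartheta(w(a+\varepsilon j))-\vartheta(w(a+\varepsilon i))$ via the fundamental theorem of calculus along a line joining the two values and using $\vartheta'\geq 0$ plus the convexity of $\mathcal{B}_{\psi_L}$, much as in \cite[Lem.~2.4]{Allen2}. Once this discrete chain-rule inequality is in place, the remaining steps are routine telescoping sums and standard estimates on $\psi_L$.
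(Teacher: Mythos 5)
Your approach is genuinely different from the paper's and is, in outline, the cleaner one. The paper substitutes $v=M-w$, multiplies by $\zeta(v-\psi_L)_+$, performs an unexplained ``integration by parts'' to an expression $\int_{|\tau|\le 1}(\cdot)\,d\nu\big|_{j=j_1}^{j=j_2}$, and then defines $\mathcal{B}_{\psi_L}$ with the fractional derivative $\partial_\varepsilon^\gamma\vartheta_\varepsilon$ inside the integrand; the justification of the boundary evaluation is opaque, and even the symbol $(v-\psi_L)_+$ does not match the $(w-\psi_L)_+$ that appears in the stated estimate. You instead test with $(w-\psi_L)_+$ directly, use a Stroock--Varopoulos/convexity inequality for the time term, and decompose the bilinear form for the space term; this tracks the strategy of Allen--Caffarelli--Vasseur and of de Pablo--Quir\'os--Rodr\'iguez much more faithfully, and it correctly identifies $\mathcal{B}_{\psi_L}$ as the primitive $\int_0^{(\eta-\psi_L)_+}\tau\,\vartheta'(\tau+\psi_L)\,d\tau$, which is what the paper actually uses from Lemma~\ref{lem:firstdeGiorgi} onward.

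However there is one genuine gap, and it is the central step. Your claim that ``summing this discrete inequality telescopes'' $\sum_{j_1<j<j_2}\varepsilon\,\partial_\varepsilon^\gamma\mathcal{B}_{\psi_L}(w)(a+\varepsilon j)$ into $\int\mathcal{B}_{\psi_L}(w)(a+\varepsilon j_2)-\int\mathcal{B}_{\psi_L}(w)(a+\varepsilon j_1)$ is not correct as stated: the discrete Marchaud derivative $\partial_\varepsilon^\gamma g(\varepsilon j)=\gamma\varepsilon\sum_{i<j}\bigl(g(\varepsilon j)-g(\varepsilon i)\bigr)\big/\bigl(\varepsilon(j-i)\bigr)^{1+\gamma}$ is a fractional, not a first-order, difference operator, so $\sum_j\varepsilon\,\partial_\varepsilon^\gamma g(\varepsilon j)$ does not collapse into a boundary difference; it produces a weighted fractional sum. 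To land on the endpoint quantities you must invoke a separate positivity argument for the associated fractional integral kernel, exploiting $\mathcal{B}_{\psi_L}(w)\ge 0$ together with $\mathcal{B}_{\psi_L}(w)(a+\varepsilon i)=0$ for $i\le 0$ (forced by $w(a)\le\psi(a)$), along the lines of the detailed estimates in the proof of \cite[Lemma~4.1]{Allen1}. This is precisely where the real work lies, and it is the step you have labeled as routine. A secondary issue: $\psi_L(t,x)$ depends on $t$ through $\psi$, so the convexity inequality $(w-\psi_L)_+\partial_\varepsilon^\gamma\vartheta(w)\ge\partial_\varepsilon^\gamma\mathcal{B}_{\psi_L}(w)$ is not exact --- there are extra terms coming from the Marchaud increments of $\psi_L$ in $t$, and part of the $C\sum\bigl(\int(w-\psi_L)_++\chi\bigr)$ correction in \eqref{energy3} must absorb these, not only the spatial tail of $\mathcal{L}^s\psi_L$ as you suggest.
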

\begin{proof}
We first consider the discrete form of~\eqref{Eq:main-vartheta} given as
\begin{equation}\label{Eq:discretemainproblem2}
\partial_{\varepsilon}^{\upgamma}\vartheta_{\varepsilon}(w)(a + \varepsilon j, x)+\mathcal{K}w(a + \varepsilon j, x) = f(a + \varepsilon j, x).
\end{equation}
We define $v:= M-w$. According to~ \eqref{Eq:discretemainproblem2} we have
\[
\partial_{\varepsilon}^{\upgamma}\vartheta_{\varepsilon}(M-v)(a + \varepsilon j, x)+\mathcal{K}\left(M-v\right)(a + \varepsilon j, x) = f(a + \varepsilon j, x).
\]
Choose $\psi_{L} \geq 0$ and a smooth cutoff function $\zeta$ vanishing near the parabolic boundary of $Q$. We multiply the above equation by the test function $\varphi = \zeta(v-\psi_{L})_{+}$, and integrate over $Q$, so that
\begin{multline*}
\sum_{0<j<k} \int_{\mathbb{R}^{N}} \zeta(v-\psi_{L})_{+} \partial_{\varepsilon}^{\upgamma}\vartheta_{\varepsilon}(M-v)(a + \varepsilon j, x)~ dx + \sum_{0<j_{1}<j_{2}<k}\mathcal{E}(w,(w-\psi_{L})_{+})(a + \varepsilon j, x) \\
=  \int_{\mathbb{R}^{N}} \sum_{0<j_{1}<j_{2}<k} \zeta (v-\psi_{L})_{+}(a + \varepsilon j, x)f(a + \varepsilon j, x).
\end{multline*}
By using integration by parts, it yields
\begin{multline*}
\left. \int_{\mathbb{R}^{N}} \int_{|\tau|\leq 1}(v-\psi_{L})_{+} \partial_{\varepsilon}^{\upgamma}\vartheta_{\varepsilon}(M-v)(a + \varepsilon j, x)~d\nu \right| _{j=j_{1}
}^{j=j_{2}}~ dx \\
+ \sum_{0<j_{1}<j_{2}<k}\mathcal{E}(w,(w-\psi_{L})_{+})(a + \varepsilon j, x) =  \int_{\mathbb{R}^{N}} \sum_{0<j_{1}<j_{2}<k} \zeta (v-\psi_{L})_{+}(a + \varepsilon j, x)f(a + \varepsilon j, x).
\end{multline*}
Let $(v-\psi_{L})_{+} = \tau$. Then 
\begin{multline*}
\left. \int_{\mathbb{R}^{N}} \int_{|\tau|\leq 1} \partial_{\varepsilon}^{\upgamma}\vartheta_{\varepsilon}(M-\psi_{L}-\tau)(a + \varepsilon j, x)\tau~d\tau \right| _{j=j_{1}
}^{j=j_{2}}~ dx \\
+ \sum_{0<j_{1}<j_{2}<k}\mathcal{E}(w,(w-\psi_{L})_{+})(a + \varepsilon j, x) =  \int_{\mathbb{R}^{N}} \sum_{0<j_{1}<j_{2}<k} \varepsilon (v-\psi_{L})_{+}(a + \varepsilon j, x)f(a + \varepsilon j, x).
\end{multline*}
Setting  
\begin{equation}\label{Eq:functional}
\mathcal{B}_{\psi_{L}}(v)=\int_{0}^{(v-\psi_{L})_{+}}\partial_{\varepsilon}^{\upgamma}\vartheta_{\varepsilon}(M-\psi_{L}-\tau)\tau d\tau,
\end{equation}
we get the following energy inequality
\begin{multline}\label{weak-w}
\left. \int_{\mathbb{R}^{N}}\mathcal{B}_{\psi_{L}}(w(a + \varepsilon j, x)) \right| _{j=j_{1}
}^{j=j_{2}}  +\sum_{0<j_{1}<j_{2}<k}\mathcal{E}(w(a + \varepsilon j, x),(w-\psi_{L})_{+})(a + \varepsilon j, x) \\
= \int_{\mathbb{R}^{N}} \sum_{0<j_{1}<j_{2}<k} \varepsilon (w-\psi_{L})_{+}(a + \varepsilon j, x)f(a + \varepsilon j, x).
\end{multline}
However, since we do not know yet whether $\vartheta(w)$ has the required time regularity, a regularization procedure in the weak
formulation using some Steklov averages and discretization procedure (see e.g. \cite{Pablo2, Pablo3, Crandall-Liggett, Benilan1, Aronson-Serrin}), allows to bypass this difficulty, since in that form $\vartheta_{\varepsilon}(w)$ will be more regular. In fact, it suffices to show that
$\partial_{t}^{\gamma}\mathcal{B}_{\psi_{L}}(w)\in L^{2}_{\mathrm{loc}}\left((-\infty,0) \times L^{2}(\mathbb{R}^{N})\right)$.

For any $v\in L^{1}(I \times \mathbb{R}^N)$  and $0<h<t_{2}-t_{1}$, we
define the Steklov average
\[
v^{h}(t,x)=\frac1h\int_{t}^{t+h}v(\tau,x)\,d\tau.
\]
Fix $t \in I$, and $h>0$ such that $t+h \in I$. So we have that
\[
\partial_{t}v^{h}(x,t)=\delta^{h}v(x,t):=\frac{v(x,t+h)- v(x,t)}{h}\,.
\]
Since $\partial_{\varepsilon}\vartheta(w)^{h}\in L^{1}(I \times \mathbb{R}^{N})$, we can obtain the weak formulation~\eqref{weak-nonlocal} in the form
\begin{multline*}
\int_{\mathbb{R}^{N}}\sum_{0<j\leq k} \varphi(a + \varepsilon j, x)~\partial_{\varepsilon}^{\gamma}\vartheta_{\varepsilon}(w)^{h}(a + \varepsilon j, x) - \sum_{0<j\leq k}\varepsilon \mathcal{E}(w^{h},\varphi)(a + \varepsilon j, x) \\=  \int_{\mathbb{R}^{N}} \sum_{0<j\leq k}\varphi(a + \varepsilon j, x)~f(a + \varepsilon j, x).
\end{multline*}
We take $\varphi = \left(\zeta \partial_{\varepsilon} w^{h} \right)^{-h}$ as test function, where $\zeta \in
C_{0}^{\infty}(I)$, $0\leq\zeta\leq 1$, $\zeta(\varepsilon j)=1$ for $\varepsilon j \in[\varepsilon j_{1}
,\varepsilon j_{2}]$, is a cut-off function. Then we use integration by parts \cite{Pablo3} over some time interval  $(\varepsilon j_{1}, \varepsilon j_{2})$, to obtain
\begin{multline*}
\int_{\mathbb{R}^{N}}\sum_{0<j\leq k} \zeta ~\partial_{\varepsilon}^{\gamma}\vartheta_{\varepsilon}(w)^{h}(a + \varepsilon j, x)~\partial_{\varepsilon} w^{h}(a + \varepsilon j, x) - \sum_{0<j\leq k} \varepsilon  \zeta \mathcal{E}(w^{h},\partial_{\varepsilon} w^{h})(a + \varepsilon j, x)\\
 =  \int_{\mathbb{R}^{N}} \sum_{0<j\leq k}\zeta \partial_{\varepsilon} (w)^{h}~f(a + \varepsilon j, x).
\end{multline*}
{}From Lemma~\ref{lem:switch}, we can write
\[
\begin{aligned}
&\int_{\mathbb{R}^{N}}\sum_{0<j\leq k} \zeta ~\partial_{\varepsilon}^{\gamma}~\partial_{\varepsilon}^{-1}
~\left( \partial_{\varepsilon}\vartheta_{\varepsilon}(w)^{h}~\partial_{\varepsilon} w^{h} \right)(a + \varepsilon j, x)\\
& = \frac{1}{2} \sum_{0<j\leq k} \varepsilon  \zeta \mathcal{E}(w^{h}, w^{h})(a + \varepsilon j, x)
 +  \int_{\mathbb{R}^{N}} \sum_{0<j\leq k}\zeta \partial_{\varepsilon} (w)^{h}~f(a + \varepsilon j, x)\\
& = -\frac{1}{2} \sum_{0<j\leq k} \partial_{\varepsilon} \zeta(t) \mathcal{E}(w^{h}, w^{h})(a + \varepsilon j, x)
 +  \int_{\mathbb{R}^{N}} \sum_{0<j\leq k}\zeta \partial_{\varepsilon} (w)^{h}~f(a + \varepsilon j, x).
\end{aligned}
 \]
Now we observe that the same inequality used in \cite{Pablo1, Pablo2, Pablo3, Brandle-dePablo} allows to prove that $\delta_{\varepsilon}^{h} \vartheta_{\varepsilon}(w)\,\delta_{\varepsilon}^{h} w\geq(\delta_{\varepsilon}^{h}(\ell(w)))^{2}$, where $(\ell^\prime)^2=\vartheta_{\varepsilon}^{\prime}$. By using this inequality
\begin{multline*}
\int_{\mathbb{R}^{N}}\sum_{0<j\leq k} \zeta ~\partial_{\varepsilon}^{\gamma-1}
~\left( \partial_{\varepsilon}\vartheta_{\varepsilon}(w)^{h}~\partial_{\varepsilon} w^{h} \right)(a + \varepsilon j, x)\\
 \leq \sum_{0<j\leq k} \vert \partial_{\varepsilon} \zeta^{\prime} \vert  \mathcal{E}(w^{h}, w^{h})(a + \varepsilon j, x)
 +  \int_{\mathbb{R}^{N}} \sum_{0<j_{1}<j_{2}\leq k}\zeta \partial_{\varepsilon} (w)^{h}~f(a + \varepsilon j, x).
\end{multline*}
By using Lemma~\ref{lem:switch} we have that $\partial_{\varepsilon}^{\gamma-1} h(a + \varepsilon j, x) \geq -c_{\nu,\gamma-1} $. On the other hand since $\partial_{\varepsilon} (w)^{h} \in L^{1} \left(I \times \R^N \right)$, then 
\[
\int_{\mathbb{R}^{N}} \sum_{0<j_{1}<j_{2}\leq k}\zeta \partial_{\varepsilon} (w)^{h}~f(a + \varepsilon j, x) \leq C~\int_{\mathbb{R}^{N}} \sum_{0<j_{1}<j_{2}\leq k} \partial_{\varepsilon} (w)^{h}~f(a + \varepsilon j, x) \leq C_{1}.
\]
Thus,
\begin{multline*}
\int_{\mathbb{R}^{N}}\sum_{0<j_{1}<j_{2}\leq k}  ~\partial_{\varepsilon}^{\gamma-1}
~\left( \partial_{\varepsilon}\vartheta_{\varepsilon}(w)^{h}~\partial_{\varepsilon} w^{h} \right)(a + \varepsilon j, x)\\
 \leq C_{\gamma}\sum_{0<j_{1}<j_{2}\leq k} \vert \partial_{\varepsilon} \zeta^{\prime} \vert  \mathcal{E}(w^{h}, w^{h})(a + \varepsilon j, x)
 + C_{\gamma} \int_{\mathbb{R}^{N}} \sum_{0<j_{1}<j_{2}\leq k}(w)^{h}~f(a + \varepsilon j, x)
 \leq C(s,\gamma,\Lambda,N ).
\end{multline*}
Furthermore from \cite{Pablo3} it can be shown that $|\delta_{\varepsilon}^{h}\mathcal{B}_{\psi_{L}}(w)| \leq |\sqrt{\vartheta^{\prime}(w)}w\delta_{\varepsilon}
^{h}\ell(w)|$, so $\delta_{\varepsilon}^{h}\mathcal{B}_{\psi_{L}}(w)\in L^{2}(I:L^{2}(\mathbb{R}^{N}))$ provided $\sqrt{\vartheta^{\prime}(w)}w\in L^\infty(\mathbb{R}^N\times I)$, and the proof ends by passing to the limit as $h\to0$.
\end{proof}

\begin{lemma}\label{lem:firstdeGiorgi}
Let $\Lambda$ be a given elliptic constant in the conditions~\eqref{Eq:kernel_laplacian} and let $w$ be a solution to \eqref{Eq:main-vartheta}. Then, there exists a constant $\sigma_{0} \in (0,1)$, depending only on $N,s,\Lambda, \gamma$ ---but independent of $\varepsilon$ and $a$--- such that for any solution $w:[a,0]\times \R^N \hookrightarrow \R$ of \eqref{Eq:main-vartheta} with $\Norm{f}_{L^{\infty}(Q)} \leq 1$ and $a \leq -1$, satisfying
\[
\int_{a}^{T} \int_{\R^N} \left[w(t,x) - \psi(t,x) \right]^{2}_{+} dx \ dt \leq \sigma_{0}, \quad w(a,x) \leq \psi(a,x) \quad \text{for all} ~~ x \in \R^N,
\] 
then
\[
w(t,x) \leq \frac{1}{2} + \psi(t,x),
\]
for $(t,x) \in [a,T] \times \R^N$. In these settings, $w \leq \frac{1}{2}$ on $[-1,0] \times B_{1}(0)$.
\end{lemma}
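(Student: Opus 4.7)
The argument is a De Giorgi $L^{2}$-to-$L^{\infty}$ iteration, with Lemma~\ref{lem:energy1} playing the role of the Caccioppoli-type energy inequality and a fractional Sobolev embedding providing the gain of integrability.

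\emph{Iteration setup.} I would fix the truncation levels $L_k := \tfrac{1}{2}(1-2^{-k})$, the shrinking time intervals anchored at $T_k := -1-2^{-k}$ (so $T_k \downarrow -1$ and $T_k \ge a$ by the hypothesis $a\le -1$), and set $\psi_k := \psi_{L_k}$, $w_k := (w-\psi_k)_+$. The assumption $w(a,\cdot)\le \psi(a,\cdot)\le \psi_k(a,\cdot)$ gives $w_k(a,\cdot)\equiv 0$, killing the initial-time boundary term. The natural quantity to iterate is
\[
U_k := \sup_{t\in[T_k,0]}\int_{\R^{N}} w_k^{2}(t,x)\,dx \;+\; \int_{T_k}^{0}\overline{\mathcal{E}}(w_k(t,\cdot))\,dt.
\]
Properties~\eqref{Eq:properties_beta} and the definition~\eqref{Eq:functional} yield a two-sided comparison $c\,w_k^{2}\le \mathcal{B}_{\psi_k}(w)\le C(w_k^{2}+w_k)$, so $U_k$ is comparable to the left-hand side of Lemma~\ref{lem:energy1}.

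\emph{Recurrence from the energy estimate.} Apply Lemma~\ref{lem:energy1} with endpoints $t_1\in[T_{k-1},T_k]$ and $t_2\in[T_k,0]$, then average $t_1$ over the interval of length $2^{-k}$, use $\|f\|_{L^{\infty}}\le 1$, and observe that $\{w_k>0\}\subset\{w_{k-1}>L_k-L_{k-1}\}=\{w_{k-1}>2^{-k-1}\}$. This produces
\[
U_k \;\le\; C\,2^{k}\int_{T_{k-1}}^{0}\!\!\int_{\R^{N}}\bigl[w_{k-1}^{2}+w_{k-1}+\chi_{\{w_{k-1}>2^{-k-1}\}}\bigr]\,dx\,dt,
\]
and Chebyshev gives $|\{w_{k-1}>2^{-k-1}\}|\le 4^{k+1}\iint w_{k-1}^{2}\le 4^{k+1}U_{k-1}$.

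\emph{Gain of integrability and closure.} Since $N>2s/\gamma>2s$, the fractional Sobolev embedding combined with the lower bound on $\K$ in~\eqref{Eq:kernel_laplacian} gives $\|u\|_{L^{2^{\star}_s}(\R^{N})}^{2}\le C\,\overline{\mathcal{E}}(u)$ with $2^{\star}_s=2N/(N-2s)$; interpolating against the $L^{\infty}_{t}L^{2}_{x}$ piece of $U_k$ yields
\[
\int_{T_{k-1}}^{0}\!\!\int_{\R^{N}} w_{k-1}^{2(1+2s/N)}\,dx\,dt \;\le\; C\,U_{k-1}^{1+2s/N}.
\]
H\"older on $|\{w_{k-1}>2^{-k-1}\}|$ then absorbs the linear and characteristic terms, delivering the nonlinear recurrence
\[
U_k \;\le\; C^{k}\,U_{k-1}^{1+\delta},\qquad \delta=\delta(N,s)>0.
\]
The standard iteration lemma forces $U_k\to 0$ whenever $U_0\le \sigma_{0}=\sigma_{0}(N,s,\gamma,\Lambda)$, which is exactly the hypothesis. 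Hence $(w-(\tfrac{1}{2}+\psi))_{+}\equiv 0$ on $[-1,0]\times\R^{N}$, giving the first conclusion; the second follows since $\psi\equiv 0$ on $[-1,1]\times B_{1}(0)$.

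\emph{Main obstacle.} The nonstandard step is the two-sided control of the nonlocal-in-time functional $\mathcal{B}_{\psi_k}(w)$. In the classical parabolic case this reduces to $\tfrac{1}{2}w_k^{2}$, whereas here it is a weighted integral of $\partial_{\varepsilon}^{\gamma}\vartheta_{\varepsilon}$ against $\tau\in[0,(w-\psi_k)_{+}]$; the lower bound needed for the $L^{\infty}_{t}L^{2}_{x}$ half of $U_k$ and the upper bound needed to make the Chebyshev averaging of $t_1$ work both hinge on the precise behavior of $\vartheta$ near the origin encoded in~\eqref{Eq:properties_beta}. A secondary difficulty is that the fractional Laplacian couples $w_k$ with the unbounded barrier $\psi_k$, generating the extra indicator $\chi_{\{w>\psi_L\}}$ in Lemma~\ref{lem:energy1}; the choice of $s/2$-growth of $\psi$ at spatial infinity is precisely what makes this tail compatible with the $|x-y|^{-N-2s}$ decay of the kernel, so that it is dominated by the measure of the super-level set and absorbed into the recurrence.
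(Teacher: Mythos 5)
Your proposal follows essentially the same De Giorgi iteration as the paper: the same truncation levels $L_k=\tfrac12(1-2^{-k})$ and time cutoffs $T_k=-1-2^{-k}$, the Caccioppoli-type inequality from Lemma~\ref{lem:energy1} together with two-sided control of $\mathcal{B}_{\psi_L}$ via~\eqref{Eq:properties_beta}, a fractional Sobolev embedding/interpolation for the gain of integrability, Chebyshev on the shifted level sets, and the standard closing lemma for the nonlinear recurrence $U_k\le C^k U_{k-1}^{1+\delta}$. The only visible divergences are cosmetic implementation choices (you track $w_k^2$ in $U_k$ where the paper tracks $w_k^{3-\gamma}$, and you use the self-consistent Sobolev pair $\overline{\mathcal{E}}\sim\|\cdot\|_{\dot H^{s}}^2$ with exponent $2N/(N-2s)$ where the paper writes $H^{s/2}$ and $2N/(N-s)$), neither of which changes the structure of the argument.
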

\begin{proof}
We split the proof into several steps. \medskip

$\bullet$ {\sc First step: Energy estimates}. Let $w:[a,0]\times \R^N \hookrightarrow \R$ solution to \eqref{Eq:discretemainproblem2}. For $\varepsilon = T/k$, and for $0 \leq L \leq 1$, we consider the truncated function $\left[ w - \psi_{L} \right]_{+}$. Then we take the test function $\varphi$ to be $\varepsilon \left[ w - \psi_{L} \right]_{+}$ and integrate over $\R^N$ to obtain
\begin{multline}\label{Eq:formulation:energy2}
\int_{\R^N} \sum_{0<j\leq k}\varepsilon \left( w - \psi_{L} \right)_{+}(a + \varepsilon j, x) \partial_{\varepsilon}^{\gamma}\vartheta_{\varepsilon}(w)(a + \varepsilon j, x) 
 + \sum_{0<j\leq k} \varepsilon \mathcal{E}(w(a + \varepsilon j, x),(w-\psi_{L})_{+})(a + \varepsilon j, x) \\
= \int_{\R^N} \sum_{0<j\leq k}\varepsilon \left( w - \psi_{L} \right)_{+}(a + \varepsilon j, x) f(a + \varepsilon j, x).
\end{multline}
{}From Lemma \ref{lem:energy1}, equation~\eqref{Eq:formulation:energy2} obeys the energy inequality 
\begin{multline}\label{Eq:formulation:energy3}
\int_{\R^{N}} \mathcal{B}_{\psi_{L}}(w)(a + \varepsilon j_{2}, x) + \sum_{0 < j_{1} < j_{2} < k}\overline{\mathcal{E}}((w-\psi_{L})_{+})(a + \varepsilon j, x) \\
\leq \int_{\R^{N}} \mathcal{B}_{\psi_{L}}(w)(a + \varepsilon j_{1}, x) + C_{\gamma}\int_{\R^{N}} \sum_{0 < j_{1} < j_{2} < k}\varepsilon (w-\psi_{L})_{+}(a + \varepsilon j, x) f(a + \varepsilon j, x)\\
+ ~C\sum_{0 < j_{1} < j_{2} < k}\left( \int_{\R^{N}} (w-\psi_{L})_{+}(a + \varepsilon j, x)+\chi_{\{w(x,t)-\psi_{L}(x)>0\}}\right).
\end{multline}
As a consequence the full energy estimate is obtained by using the properties of $\vartheta$, $\partial_{\varepsilon}\vartheta$ and $\partial_{\varepsilon}^{\gamma}\vartheta$ given in \eqref{Eq:properties_beta}. In this way, the following bounds are obtained
\begin{equation*}
\mathcal{B}_{\psi_{L}}(w)=\int_{0}^{(w-\psi_{L})_{+}}\vartheta^{\prime}(\tau+\psi_{L})\tau d\tau \geq \frac{c_{1}}{(1-\upgamma)\Gamma(1-\upgamma)}\int_{0}^{(w-\psi_{L})_{+}} \tau^{2-\upgamma} d \tau 
\geq c_{1}(\upgamma)(w-\psi_{L})_{+}^{3-\upgamma},
\end{equation*}
and
\begin{multline*}
\mathcal{B}_{\psi_{L}}(w)=\int_{0}^{(v-\psi_{L})_{+}}\partial_{\varepsilon}^{\upgamma}\vartheta_{\varepsilon}(M-\psi_{L}-\tau) d\tau \leq \frac{c_{1}}{(1-\upgamma)\Gamma(1-\upgamma)}\int_{0}^{(v-\psi_{L})_{+}} \tau^{1-\upgamma} d \tau \\
 \leq c_{2}(c_{1},\upgamma)(w-\psi_{L})_{+}^{2-\upgamma}.
\end{multline*}
Hence, we get the bounds of the functional $\mathcal{B}_{\psi_{L}}(w)$ given as 
\begin{equation*}
\label{estimate-B}\Lambda_{1}(w-\psi_{L})_{+}^{2}\le \mathcal{B}_{\psi_{L}}(w)\le \Lambda_{2} (w-\psi_{L})_+,
\end{equation*}
where
\begin{equation}\label{eq:lambdas}
\Lambda_1=\frac12\inf_{\ell\le \tau \le M}\vartheta'(\tau),\qquad \Lambda_2=\vartheta(M)-\vartheta(\ell).
\end{equation}
In this scenario the energy estimate~\eqref{Eq:formulation:energy3} yields
\begin{multline}\label{Eq:formulation:energy4}
\Lambda_{1}\int_{\R^{N}} \varepsilon(w-\psi_{L})_{+}^{3-\upgamma}(a + \varepsilon j_{2}, x) + C_{s,\gamma,\Lambda,N} \int_{\R^{N}} \sum_{0 < j_{1} < j_{2} < k} \left\vert \left(-\Delta \right)^{\frac{s}{2}} \left((w-\psi_{L})_{+}\right)(a + \varepsilon j, x) \right\vert^{2} \\
+ ~C\sum_{0 < j_{1} < j_{2} < k}\left( \int_{\R^{N}}
(w-\psi_{L})^{2}_{+}(a + \varepsilon j, x)+(w-\psi_{L})_{+}(a + \varepsilon j, x)+\chi_{\{w(x,t)-\psi_{L}(x)>0\}}\right) \\
\leq \Lambda_{2}\int_{\R^{N}} \varepsilon(w-\psi_{L})_{+}^{2-\upgamma}(a + \varepsilon j_{1}, x) + C_{\gamma} \int_{\R^{N}} \sum_{0 < j_{1} < j_{2} < k}\varepsilon (w-\psi_{L})_{+}(a + \varepsilon j, x) f(a + \varepsilon j, x).
\end{multline}
We denote $v=(w-\psi_{L})_{+}$. We also define 
\[
 \int_{\R^{N}} \sum_{0 < j_{1} < j_{2} < k}\varepsilon (w-\psi_{L})_{+}(a + \varepsilon j, x) f(a + \varepsilon j, x) =  \int_{\R^{N}} \int_{a}^{T} f~v,
\]
as well as 
\[
\int_{\R^{N}} \sum_{0 < j_{1} < j_{2} < k} \left\vert \left(-\Delta \right)^{\frac{s}{2}} \left((w-\psi_{L})_{+}\right)(a + \varepsilon j, x) \right\vert^{2} = \int_{t_{1}}^{t_{2}} \frac{1}{\Lambda} \norm{v}^{2}_{H^{\frac{s}{2}}(\R^N)}.
\]
Since $|f| \leq 1$, the term $\left(|f| + 1 \right)v$ is controlled by $2v$. Hence~\eqref{Eq:formulation:energy4} becomes 
\begin{multline}\label{Eq:formulation:energy5}
\Lambda_{1}\int_{\R^{N}}v^{3-\upgamma}(t_{2}, x) dx + C_{s,\Lambda,N} \int_{t_{1}}^{t_{2}} \frac{1}{\Lambda} \Norm{v}^{2}_{H^{\frac{s}{2}}(\R^N)} \\
\leq \Lambda_{2}\int_{\R^{N}} v^{2-\upgamma}(t_{1}, x)dx 
+ ~C_{s,\gamma,\Lambda, N}\int_{t_{1}}^{t_{2}} \int_{\R^{N}}  \left[ v^{2}+ v +\chi_{\{v>0\}} \right].
\end{multline}

$\bullet$ {\sc Second step: Nonlinear recurrence}\medskip

{}From the energy inequality, we establish a nonlinear recurrence relation to the following sequence of truncated energy
\[
U_{k}:=\sup_{t\in [T_{k},0]} \int_{\R^{N}}\Norm{(w-\psi_{L_{k}})_{+}(t,x)}^{3-\upgamma} \ dx  + \int_{T_{k}}^{0}  \Norm{(w-\psi_{L_{k}})_{+}(t,\cdot)}^{2}_{H^{\frac{s}{2}}(\R^N})\ dt,
\]
where we denote $T_{k} = -(1+\frac{1}{2^{k}})$, $L_{k} = \frac{1}{2}\left(1-\frac{1}{2^{k}}\right)$, and the cylinder domain $Q_{k} = \left[T_{k},0 \right] \times \R^N$.  Furthermore, let us consider two times variables $t_{1}, t_{2}$ that satisfy $T_{k-1} \leq t_{1} \leq T_{k} \leq t_{2} \leq 0$. By taking the time integral over $I=[t_{1},t_{2}]$, we obtain~\eqref{Eq:formulation:energy5}. \medskip

Next by taking average over $t_{1} \in [T_{k-1},T_{k}]$ and then taking the supremum over $t_{2}\in [T_{k},0]$ in ~\eqref{Eq:formulation:energy5}, we deduce that 
\begin{equation}\label{Eq:formulation:energy6}
U_{k} \leq 2^{k}~C_{s,\upgamma,\Lambda, N} \frac{ \left(1 + \Lambda_{2} \right)}{\Lambda_{1}}\left[\int_{t_{1}}^{t_{2}} \int_{\R^{N}} 
(w-\psi_{L_{k}})_{+}^{2}+ (w-\psi_{L_{k}})_{+} +\chi_{\{w-\psi_{L_{k}}>0\}} dx \ dt \right].
\end{equation}
Now we use the Sobolev embedding for fractional spaces \cite{Hitch2012, Allen1, CaffarelliChanVasseur, Luis_sylvestre}. From \cite{Hitch2012} and \cite{Allen1} we can write the following relation and the Sobolev embedding equivalence 
\[
\Norm{(w-\psi_{L_{k}})_{+}}^{2}_{\left(\R\right)} \geq C(\upgamma) \Norm{(w-\psi_{L_{k}})_{+}}^{2}_{H^{\frac{\upgamma}{2}}(\R)}~\text{and}~ H^{\frac{s}{2}}(\R^N) \subset L^{\frac{2N}{N-s}}
\left(\R^N\right),
\]
so that with an interpolation we obtain  
\begin{equation}\label{Eq:formulation:energy7}
\Norm{(w-\psi_{L_{k}})_{+}}^{2}_{L^{p^{*}}\left(Q_{k}\right)} \leq 2^{k}C_{1}\left[\int_{t_{1}}^{t_{2}} \int_{\R^{N}} 
(w-\psi_{L_{k}})_{+}^{2}+ (w-\psi_{L_{k}})_{+} +\chi_{\{w-\psi_{L_{k}}>0\}} dx \ dt \right],
\end{equation}
with
\[
p^{*} = \left( \frac{s + \upgamma N }{(1-\upgamma)s + \upgamma N } \right)~~\text{and}~~ C_{1}:=~C_{s,\upgamma,\Lambda, N} \frac{ \left(1 + \Lambda_{2} \right)}{\Lambda_{1}}.
\]
Since $L_{k} = L_{k-1} + 2^{-k-1}$, we have that $w_{k-1} > 2^{-k-1}$. Using the Chebyshev type inequality
\[
\int_{\R^N} w_{k}^{q} \leq 2^{(k+1)(p^{*}-q)}\int_{\R^N}w_{k-1}^{p^{*}},
\]
for every $p^{*}>q$, the following inequalities are obtained
\begin{equation*}
\begin{cases}
\displaystyle     \int_{Q_{k-1}} (w-\psi_{L_k})_+^2 & \displaystyle \leq \int_{Q_{k-1}}(w-\psi_{L_{k-1}})_+^2 
                  \chi_{ \{w-\psi_{L_{k-1}} > 2^{-k-1}\}}  \\  
                                & \displaystyle \leq 2^{(k+1)(p^{*}-2)} \int_{Q_{k-1}} (w-\psi_{L_{k-1}})_+^{p^{*}} \leq 2^{(k+1)(p^{*}-2)} C_N^{p^{*}} U_{k-1}^{p^{*}/2}\\
\displaystyle \int_{Q_{k-1}} (w-\psi_{L_k})_+ & \displaystyle \leq \int_{Q_{k-1}}(w-\psi_{L_{k-1}})_+ 
                 \chi_{ \{w-\psi_{L_{k-1}} > 2^{-k-1}\}}  \\
                               &\displaystyle \leq 2^{(k+1)(p^{*}-1)} \int_{Q_{k-1}}(w-\psi_{L_{k-1}})_+^{p^{*}} \leq 2^{(k+1)(p^{*}-1)} C_{N}^{p^{*}} U_{k-1}^{p^{*}/2} \\
\displaystyle      \int_{Q_{k-1}} \chi_{ \{w-\psi_{L_k} >0\} } & \displaystyle \leq (2^{k+1})^{p^{*}}  \int_{Q_{k-1}}(w-\psi_{L_{k-1}})_+^{p^{*}} \leq 2^{(k+1)p^{*}} C_N^{p^{*}} U_{k-1}^{p^{*}/2}.                                
\end{cases}
\end{equation*}
Combining the above three inequalities with~\eqref{Eq:formulation:energy7} we get
\begin{multline}\label{Eq:nonlinearrecurrence}
U_{k} \leq 2^{k} \left(\frac{1 + \Lambda_{2}}{\Lambda_{1}} \right) \\ \times C_{s,\upgamma, \Lambda, N} \left\lbrace  2^{(k+1)(p^{*}-2)} C_N^{p^{*}} U_{k-1}^{p^{*}/2} + 2^{(k+1)(p^{*}-1)} C_N^{p^{*}} U_{k-1}^{p^{*}/2} + 2^{(k+1)(p^{*}-2)} C_N^{p^{*}} U_{k-1}^{p^{*}/2}   \right\rbrace  \\
\leq (2^{k+1})^{p^{*}} \left(\frac{1 + \Lambda_{2}}{\Lambda_{1}} \right)\widebar{C}_{s,\upgamma, \Lambda, N} U_{k-1}^{p^{*}/2}, 
\end{multline}
for all $k \geq 0$, and for some constant $\widebar{C}$ depending on $s,\upgamma, \Lambda, N$. \medskip

{}From the nonlinear recurrence relation obtained in \eqref{Eq:nonlinearrecurrence}, there exist a ---small enough--- constant  $\varepsilon_{0}^{*}$ depending only on $\widebar{C}_{s,\upgamma, \Lambda, N}$, such that if $U_{1} \leq \varepsilon_{0}^{*}$, then it follows that $\lim_{k\to \infty} U_{k} = 0$. By using the classical Sobolev embedding argument and relation~\eqref{Eq:formulation:energy7} we have that
\[
U_{1} \leq C \int_{a}^{T} \int_{\R^N} \left|w-\psi\right|^{2} \ dx \ dt.
\]  
Then $U_{k} \to 0$ implies that 
\[
w \leq \psi + \frac{1}{2} \quad ~~ \text{for}~ t\in [a,T]\times \R^N.
\]
Next we state a corollary of Lemma~\eqref{lem:firstdeGiorgi}, which shows that any solutions are indeed bounded for $t>0$ and for the constant 
\begin{equation}\label{Eq:relation:delta}
\delta_{\upgamma}^{*}(\vartheta) = \frac{\inf_{1/4 \leq \tau \leq 2} \partial_{t}^{\upgamma}\vartheta(\tau)}{1 + \vartheta(2)- \vartheta(1/4)} >0.
\end{equation}
\begin{corollary}\label{cor:boundedsol}
For $\Norm{f}_{L^{\infty}\left((a,T)\times \R^N \right)} \leq 1$, and for $\delta_{\upgamma}^{*}(\vartheta)>0$ as given in ~\eqref{Eq:relation:delta}, there exists a constant $\nu^{*}\in (0,1)$ and $\varepsilon_{0}>0$, depending only on $N,\Lambda,\upgamma,s$,such that for any solution $w:[a,0]\times\R^N \hookrightarrow \R$ to~\eqref{Eq:main-2} with $\varepsilon < \varepsilon_{0}$, $a \leq -2$ satisfying
\[
\psi(t,x) \leq 1 + \left(|t|^{\upgamma/4}-1\right)_{+} + \left(|x|^{\upgamma/4}-1\right)_{+} \quad \text{if}~~ t\leq 0,
\]
and 
\[
\left|\left\lbrace w>0\right\rbrace \cap \left([-2,0] \times B_{2} \right) \right| \leq \nu^{*},
\]
we have 
\[
w(t,x) \leq \frac{1}{2} \quad \text{for} \quad (t,x) \in [-1,0]\times B_{1}.
\]
\end{corollary}
\begin{proof}
Fixing $(t_{0},x_{0})$ in $[-1,0] \times B_{1}$, for some large value $R$ depending on $s,\upgamma, N$, we introduce the rescaled function $w_{R}$ defined on $\displaystyle \left[R^{s/\upgamma}(a-\tau_{0}), R^{s/\upgamma}-t_{0} \right]\times \R^N $ by 
\[
w_{R}(\tau,y):= w\left(t_{0} + \frac{\tau}{R^{s/\upgamma}}, x_{0} + \frac{y}{R} \right).
\]
This rescaled function $w_{R}$ solves the equation
\[
\partial_{t}^{\upgamma} \vartheta(w_{R}) + \K_{R}w_{R} = f_{R},
\]
with the rescaled kernel associated to $\K$ as
\[
\K_{R}:= \frac{1}{R^{s+N}} \K\left(t_{0} + \frac{\tau}{R^{s/\upgamma}}, x_{0} + \frac{y}{R} \right).
\]
The right hand side of the equation is
\[
f_{R}:=f\left(t_{0} + \frac{\tau}{R^{s/\upgamma}}, x_{0} + \frac{y}{R} \right),
\]
with $\Norm{f_{R}} \leq 1$. Now we study the condition for which for $R$ large enough so that $w_R(\tau,y) \leq \psi(\tau,y)$ for any $(\tau,y) \notin [-R^{s/ \upgamma},0]\times B_R$ we have
\[
\int_{R^{\frac{s}{\upgamma}}(a-t_0)}^{0} \int_{\R^N} [w_R(\tau,y)- \psi(\tau,y)]_+^{2} \leq \varepsilon_{0}^{*}.
\]
We have defined $\psi^{*} := \left(|t|^{\upgamma/4}-1\right)_{+} + \left(|x|^{\upgamma/4}-1\right)_{+} + 1$.\medskip

{}From \cite[Corollary 4.2]{Allen1} it follows that 
\begin{multline*}
\int_{R^{\frac{s}{\upgamma}}(a-t_0)}^{0} \int_{\R^N} [w_R(\tau,y)-  \psi(\tau,y)]_+^{2} = \int_{-R^{\frac{s}{\upgamma}}}^{0} \int_{|y| \leq R} [w_R(\tau,y)- \psi(\tau,y)]_+^2   \\
       \leq \int_{-R^{\frac{s}{\upgamma}}}^{0} \int_{|y| \leq R} [w_R(\tau,y)]_+^2 \leq R^{N+ \frac{s}{\upgamma}}\int_{t_{0}-1}^{t_{0}} \int_{\{x_{0}\} + B_{1}} [w_R(\tau,y)]_+^2 \\
       \leq R^{N+ \frac{s}{\upgamma}}\int_{-2}^{0} \int_{B_{2}} [w_R(\tau,y)]_+^2       \leq R^{N+ \frac{s}{\upgamma}}(\psi^{*}(-2,2)+1)^2 \nu^{*}.
\end{multline*}
If we choose $\nu^{*} = R^{-N-\frac{s}{\upgamma}}(1+\psi^{*}(-2,2))^{-2} \varepsilon^{*}_0$, then
\[
w(t_0,x_0) \leq \frac{1}{2}, \quad \text{for}~(t_0,x_0) \in (-1,0) \times B_1.
\]
\end{proof}

\subsection{The second De Giorgi's Lemma}\label{seconddegiorgi}
Our interest is to analyze the quantitative behaviour of the solution $w(t,x)$ of \eqref{Eq:main-vartheta} under the conditions provided in the proof of the first De Giorgi Lemma~\ref{lem:firstdeGiorgi}. In fact, since the kernel of both nonlocal operators (in time and in space) are very oscillating, the goal is to show that a function with a jump discontinuity cannot be in the energy space. So, some conditions to control the nonlinearity are needed in order to avoid that the equation is degenerate or singular. \medskip

To proceed, we need the following functions (see e.g. \cite{Allen1, Allen2, Pablo3, CaffarelliChanVasseur})
\[
F_1(x) := \sup (-1, \inf (0,|x|^2 -9)), \qquad F_2(t):= \sup (-1, \inf (0,|t|^2 -16)),
\]
where $F_1$ is a Lipschitz and compactly supported function in $B_3$ and equal to $-1$ in $B_2$, and $F_2$ is also a Lipschitz and compactly supported function in $[-4,4]$ and is equal to $-1$ in $[-3,3]$. \medskip

We define the two following functions
\[
\psi_{\lambda}(t,x) :=
((|x|- \lambda^{-4/ s})^{s /4}-1)_+   \chi_{\{|x|\geq \lambda^{-4/s} \}} + ((|t|- \lambda^{-4/ \upgamma})^{\upgamma /4}-1)_+   \chi_{\{|t|\geq \lambda^{-4/\upgamma} \}}. 
  \]  
Our Lemma will involve the following sequence of five cutoffs:
\[
\upvarphi_i := 2+ \psi_{\lambda^3}(t,x)  + \lambda^{i}F_1(x) + \lambda^{i}F_2(t), 
\]
for $0 \leq i \leq 4$ and $\lambda < 1/3$.
\begin{lemma}\label{lem:seconddegiorgi}
Assume $C_{1}(\upgamma) \leq \partial_{t}^{\upgamma} \vartheta(\tau) \leq C_{2}(\upgamma)$, for every $\tau \in [1/2,2]$ and let $\nu^{*}$ be a constant defined in Corollary \ref{cor:boundedsol}. For $0<\mu <1/8$ fixed, there exists $\lambda \in (0,1)$, such that for any solution $w:[a,0]\times \R^N \to \R$ to \eqref{Eq:main-vartheta} with $a\leq -4$, and $|f|\leq 1$ satisfying
\[
\begin{cases}
 \displaystyle w(t,x) \leq 2 + \psi_{\lambda^3}(t,x),\quad \text{on} \quad [a,0] \times \R^N, \\
 \displaystyle |\{w<\upvarphi_0\} \cap ((-3,-2) \times B_1)| \geq \mu,
\end{cases}
\]
then we have 
\[
|\{w>\phi_4\} \cap ((-2,0) \times B_2)| \leq \nu^{*}.
\]
\end{lemma}

This Lemma is similar to the Lemma which states the second De Giorgi's approach from \cite{Allen1, Pablo3}, but now with an extra condition on $\partial_{t}^{\upgamma}~\vartheta(\tau) $, given as 
\begin{equation}\label{Eq:condts}
C_{1}(\upgamma)(w-\widebar{\psi})^{3-\upgamma}_{+} \leq \mathcal{B}_{\widebar{\psi}}(w) \leq C_{2}(\upgamma)(w-\widebar{\psi})^{3-\upgamma}_{+},
\end{equation}
with $\widebar{\psi} = 1 + \psi_{\lambda} + \lambda F_{i}$, the truncation function satisfying the improved energy estimate. The proof can be achieved following the same strategy as in \cite{Allen1, CaffarelliChanVasseur} under the condition~\eqref{Eq:condts}. \medskip

Next we are in conditions to prove the regularity results with the support of previous Lemmas and Corollaries.\medskip 

We start by defining the following function 
\[
\psi_{\tau, \lambda}(t,x) := ((|x|- \lambda^{-4/ s})^{\tau}-1)_+   \chi_{\{|x|\geq \lambda^{-4/s} \}} + ((|t|- \lambda^{-4/ \upgamma})^{\tau}-1)_+   \chi_{\{|t|\geq \lambda^{-4/\upgamma} \}}. 
\]  
\begin{lemma}\label{lem:decrease}
Let $\vartheta$ be such that $\delta_{\upgamma}^{*}(\vartheta)>0$, and assume that $C_{1}(\upgamma) \leq \vartheta_{\tau}^{\upgamma}(\tau) \leq C_{2}(\upgamma)$. There exist $\tau_0$ and $\lambda^*$ such that if for any solution to \eqref{Eq:weak} in $[a,0] \times \R^N$ with $|f|\leq \lambda^4$ and $a\leq -4$ with
\[
-2-\psi_{\tau, \lambda}  \leq w \leq 2+\psi_{\tau, \lambda} ,
\]
then we have 
\[
\sup_{[-1,0] \times B_1} w - \inf_{[-1,0]\times B_1} w \leq 4 - \lambda^{*}.
\]
\end{lemma}
\begin{proof}
We fix $\tau>0$ depending on $\lambda,s,\upgamma$ such that
\[
\frac{(|x|^\tau -1)_+}{\lambda^4} \leq (|x|^{s /4}-1)_+  
\quad \text{  and  } \quad \frac{(|t|^\tau -1)_+}{\lambda^4} \leq (|t|^{\upgamma/4}-1)_+,
\]
and we assume that
\[
\left|\{w<\upvarphi_0\} \cap ((-3,-2) \times B_1)\right| > \mu. 
\]
From Lemma \ref{lem:firstdeGiorgi}, we have that 
\[
\left|\{w>\upphi_4\} \cap ((-2,0)\times B_2)\right| \leq \nu^{*}.
\]
One should notice that $-w$ solves \eqref{Eq:main-vartheta} with $\vartheta$ replaced by $(-\vartheta(-\tau))$. From the hypothesis on $\partial_{t}^{\upgamma} \vartheta$ (see \eqref{Eq:properties_beta}) either $w$ or $-w$ will satisfy the hypotheses of Lemma~\ref{lem:seconddegiorgi}. Hence, we assume that is $w$ which solves \eqref{Eq:properties_beta}.\medskip

Next we consider the sequence of rescaled functions 
\[
w_{k+1}(t,x):= \lambda^{-4}(w_{k}-2(1-\lambda^4)), \qquad w_{0} = w.
\]
We have that $w_{k}$ is a weak solution to \eqref{Eq:main-vartheta} with nonlinearity $\vartheta_{k+1}$ given by
\[
\vartheta_{k+1}(s)=\frac{1}{\lambda^{4}}\vartheta_k(\lambda^{4} \tau+ 1-\lambda^{4}),\qquad \vartheta_{0} = \vartheta.
\]
The goal is to prove that for each $k$ we can apply either Lemma~\ref{lem:firstdeGiorgi} or Lemma~\ref{lem:seconddegiorgi}. As shown in \cite{Pablo3}, repeated application of Lemma~\ref{lem:seconddegiorgi} gives that Lemma~\ref{lem:firstdeGiorgi} can be applied after a finite number of steps.\medskip

Since $\lambda$ was chosen so that 
   \[
    \upvarphi_4(t,x)=2-2\lambda^4 \text{  for  } (t,x) \in [-2,0] \times B_2,
   \]
we have that
\[
\left|\{w_{k+1}>0\} \cap ([-2,0]\times B_2)\right| \leq 
\left|\{w>\phi_4\} \cap ((-2,0)\times B_2)\right| \leq \nu^{*}.
\]
On the other hand
$\partial_{\tau}^{\upgamma} \vartheta_{k+1}(\tau)= C_{\upgamma, \lambda}\partial_{\tau}^{\upgamma}\vartheta_k(\lambda^{4}\tau +1-\lambda^{4})$. Since $\lambda^4{\tau}+1-\lambda^{4}\in[1/2,2]$ whenever $\tau\in[1/2,2]$, we have  $C_{1}(\upgamma)\le \partial_{\tau}^{\upgamma} \vartheta_{k}(\tau)\le C_{2}(\upgamma)$ for every $k$.\medskip

Furthermore, since $[1-\lambda^{4},1+\lambda^4]\subset[1/2,2]$, we get $\nu^{*}(\vartheta_k)\ge \bar\delta_{\upgamma}^{*}>0$ for all $k$.
In addition,
\[
w_{k+1} \leq 2+ \frac{\psi_{\tau,\lambda^3}(x,t)}{\lambda^4} \leq 2+\psi_{\lambda^3} 
\leq 2+ \psi^{*}.
\]
With the previous estimate of $ w_{k+1}$, it comes out that $ w_{k+1}$  satisfies  also \eqref{Eq:weak} with right hand side $|f|\leq 1$. 
Then applying Corollary \ref{cor:boundedsol} to $ w_{k+1}$, we get by induction that 
 \[
w(t,x) \leq 2 - \frac{3}{2}\lambda^4  \text{  for  }  (t,x) \in [-1,0] \times B_1.
\]
 \end{proof}
We now end with the proof of the H\"{o}lder regularity result in Theorem~ \ref{theo:regularity} which states that the oscillation of the solution $w$ in the cylinder $\Gamma_{2}$ is reduced in $\Gamma_{1}$ by a factor $\kappa^{*} = (1- \lambda^{*}/4)$. In doing so, we shall use the previous lemmas and corolaries and additionally to that, we follow the idea provided in \cite{Allen1, Pablo3, CaffarelliSoriaVazquez}. \medskip

Indeed, we consider $(t_0,x_0) \in (a , \infty) \times \R^N$ and assume that $(t_0 -a)>4$. We make a translation and a dilation to the origin by considering $\zeta_0 = \inf(1,t_0 /4)^{\upgamma/s}$,
\[
w_0(t,x):= w(t_0 + \zeta_0^{s / \upgamma} t^{s/ \upgamma} , x_0 + \zeta_0 x). 
\]
Then $w_0$ satisfies an equation of the type \eqref{Eq:weak}. Furthermore, the initial time for $w_0$ will be $(a-t_0)\zeta_0^{-s / \upgamma} < -4$.  \medskip
   
Now we consider $\zeta<1$ such that
\[
\frac{1}{1-(\lambda^* /2)} \psi_{\tau,\lambda}(x) \leq \psi_{\tau,\lambda}(x), \text{  for  }  |x|\geq 1/\zeta.
\]
with $\zeta$ a constant depending on $\lambda, \lambda^{*}, \tau$. We define by induction:
\begin{align*}
      w_1(t,x) &= \frac{w_0(t,x)}{\| w_0 \|_{L^{\infty}}+ \lambda^4\| f \|_{L^{\infty}}} ,  \quad (t,x) \in (a,0) \times \R^N, \\
      w_{k+1}(t,x) &= \frac{1}{1-\lambda^* /4} (w_k(\zeta^{s / \upgamma}t,\zeta x)-\overline{w}_k), \quad (t,x) \in (a\zeta^{-s k},0) \times \R^N,
\end{align*}
with
\[
\overline{w}_k := \frac{1}{|B_1|} \int_{-1}^0 \int_{B_1} w_k(t,x) dx dt.
\]
Then, $w_1(t,x)$ is a solution to the equation
\[
\partial_t^{\upgamma} \vartheta_{0}(w_1)+ \mathcal{K}_0 w_1 = f,
\]
$\vartheta_0(\tau)$ satisfies~\eqref{Eq:properties_beta} in its rescaled form, and the operator $\mathcal{K}_0$ is the nonlocal
integral operator associated to the rescaled kernel
\[
\K_{0}(x,y)=\tau_0^{\frac{N+s}{s}}\K(x_0+\tau_{0}^{1/s}x,x_0+\tau_0^{1/s}y),
\]
and 
\[
\partial_t^{\upgamma} \vartheta_k(w_k)+\mathcal{K}_kw_k= \bar{f}, \qquad\vartheta_{k}(\tau)=\dfrac{\vartheta_0\left(\kappa^{*}_{k} \tau +\overline{w}_{k}\right)}{\kappa^{*}_{k}},
\]
where the operator $\K_k$ has associated kernel
\[
\K_{k}(x,y)=R^{-(N+\sigma)(k+1)}\K_0(R^{-(k+1)}x,R^{-(k+1)}y),
\]
for a given $R>1$.

On the other hand, $w_k$ satisfies the hypothesis of Lemma \ref{lem:decrease} for any $k$. The end of the proof, follows from the decreasing of oscillation of certain mean value of $w_{k}$ in the cylinder $Q_{k} = \Gamma_{R^{-k}}$, the modulus of continuity and the iteratively control of degeneracy and non degeneracy points from \cite{Pablo3, CaffarelliSoriaVazquez} in order to conclude that
\[
C(1-\lambda^* /4)^k \geq \sup_{\rho} w 
  ~-~  \inf_{\rho} w,
\]
with $\varpi:=t_0 +(-\zeta^{s/ \upgamma},0) \times (x_0 + B_{\zeta^k})$.
Thus $w$ is $C^{\beta}$ with 
\begin{equation}\label{e:beta}
\beta = \frac{\log (1-\lambda^* /4)}{\log \zeta^{s/ \upgamma}}.
\end{equation}
\end{proof}


\section*{Appendix A. Proof of Lemma~\ref{lem:contraction-estimate}}\label{Appendix1}

\begin{proof}
We split the proof in two steps:\medskip

{\sc STEP 1}. We start with the computation of the fractional derivative of $\vartheta \eta$.
\[
\partial_{\varepsilon}^{\upgamma}\left(\eta \vartheta\right)(\varepsilon j)=\upgamma \varepsilon \sum _{i<j} \frac{\left[\vartheta(\varepsilon j)\eta(\varepsilon j)- \vartheta(\varepsilon i)\eta(\varepsilon i)\right]}{\left( \varepsilon(j-i)  \right)^{1+\upgamma}}.
\]
Also, we have
\[
\vartheta(\varepsilon j)\eta(\varepsilon j)-\vartheta(\varepsilon i)\eta(\varepsilon i) = \eta(\varepsilon j) \left(\vartheta(\varepsilon j)-\vartheta(\varepsilon i) \right) + \vartheta(\varepsilon i) \left(\eta(\varepsilon j) -\eta(\varepsilon i) \right).
\]
On the other hand, since $\eta \in C^{\infty}$, we set 
\[
\eta_{1}(t,s):=\frac{\eta(t)-\eta(s)}{(t-s)^{1+\upgamma}} ,
\]
which is also smooth. Then we obtain
\begin{multline}\label{Eq:computionA}
\partial_{\varepsilon}^{\upgamma}\left(\eta \vartheta\right)(\varepsilon j)=\upgamma \varepsilon \sum _{i<j} \frac{\eta(\varepsilon j)\left(\vartheta(\varepsilon j)- \vartheta(\varepsilon i)\right)}{\left( \varepsilon(j-i)  \right)^{1+\upgamma}} + \upgamma \varepsilon \sum _{i<j} \frac{  \eta(\varepsilon i)  \left(\eta(\varepsilon j)- \eta(\varepsilon i)\right)}{\left( \varepsilon(j-i)  \right)^{1+\upgamma}} \\
 = \eta(\varepsilon j)\partial_{\varepsilon}^{\upgamma} \vartheta(\varepsilon j) + \varepsilon \sum _{i<j} \frac{\tilde{\eta}(\varepsilon i)\vartheta(\varepsilon i)}{\left( \varepsilon(j-i)  \right)^{\upgamma}}.
\end{multline}
Plugging \eqref{Eq:computionA} into \eqref{Eq:main-2}, we get that $\vartheta \eta$ satisfies \eqref{Eq:main-2} with right hand side
\[
\tilde{f} = f - \varepsilon \sum_{i<j} \frac{\eta_{1}(\varepsilon j, \varepsilon i) \vartheta (\varepsilon i)}{\left( \varepsilon(j-i)  \right)^{\upgamma}}.
\]
Since $\eta_{1} \in C^{\beta}$, $w \in C^{\beta}$ with  $\left(\mathrm{Tr}(\vartheta)\right)^{1/m} = w \in L^{1}(Q) \cap L^{\infty}(Q)$ for $t>0$, and $w$ is bounded, it follows that  the following inequality holds
\begin{multline*}
\left|\tilde{f} \left(\varepsilon(j+h)\right) - \tilde{f} \left(\varepsilon i\right)\right|\\
 = \left|f \left(\varepsilon(j+h)\right) - f \left(\varepsilon i\right) - \frac{\e}{h^{\beta}} \sum_{0<i<j+h}\frac{\tilde{\eta}(\e i) \vartheta(\e i)}{(\e(j+h-i))^{\upgamma}} 
     + \frac{\e}{h^{\beta}} \sum_{0<i<j}\frac{\tilde{\eta}(\e i) \vartheta(\e i)}{(\e(j-i))^{\upgamma}} \right| \\
      \leq \left|f \left(\varepsilon(j+h)\right) - f \left(\varepsilon i\right) \right| + \left|\frac{\e}{h^{\beta}} \sum_{0<i<j+h}\frac{\tilde{\eta}(\e i) \vartheta(\e i)}{(\e(j+h-i))^{\upgamma}} - \frac{\e}{h^{\beta}} \sum_{0<i<j}\frac{\tilde{\eta}(\e i) \vartheta(\e i)}{(\e(j-i))^{\upgamma}} \right|  .
\end{multline*}
As $\varepsilon i \to 0$, we have $w(0,x) \sim \vartheta^{1/m}(0,x)$ and $\tilde{f}(t)$ coincides with $\displaystyle \lim_{\varepsilon h \to 0}\tilde{f}\left(\varepsilon(j+h)\right)$. Hence
\[
\left|\tilde{f} (t) - \tilde{f}(s)\right| \leq  \left|f(t)-f(s)\right| + C(\upgamma) \left| \vartheta^{1/m}(0,x) \right|\left|t-s\right|^{\beta}.
\]
Since $f\in C^{\beta}$, then $\left|f(t)-f(s)\right| \leq C\left|t-s\right|^{\beta}$ and finally
\[
\left|\tilde{f} (t) - \tilde{f}(s)\right| \leq \widetilde{C} \left|t-s\right|^{\beta},
\]
with $\widetilde{C} = C(\left| \vartheta^{1/m}(0,x) \right|, \upgamma)$.\medskip

{\sc STEP 2}. Next we establish contractivity of solution to problem \eqref{Eq:main-2}. As we already mentioned, for simplicity we assume the kernel $\K$ to be time-independent. By using the Crandall-Liggett theorem \cite{Crandall-Liggett} for the implicit discretization of nonlinear evolution problem, we consider the difference quotient
\[
u:=\frac{\eta\vartheta\left(\varepsilon(j+h)\right) - \vartheta (\varepsilon j)}{\left(\varepsilon h\right)^{\beta}}.
\]
It comes out that from \cite[Remark 6.3]{Allen1} and \cite[Theorem 3.2]{Pablo1} once can show that $u\in L^{\infty}$. From the first step of the proof, $u$ satisfies \eqref{Eq:main-2} with $L^{\infty}$ at the right hand side. Thus, $u$ satisfies the estimate \eqref{Eq:contraction3} with modulus $2\rho(\varepsilon)$  \cite{CaffarelliCabre1997}.
\end{proof}

\begin{remark}\label{rem:estimateweak}
One can show that the pair solution $(u,\vartheta)$ to \eqref{Eq:weak} are strong in time-solution. In doing so, some regularity of $\vartheta^{1/m}(0,x)$ is required in order to ensure that $u$ is continuous up to the initial time. This type of technique was used in \cite{Allen1}. Proceeding similarly one can show that $u \in C^{0,\beta}\left([0,T]\times \R^N\right)$, which implies that $\vartheta^{1/m}(0,x) \in C^{0,s}(\R^N)$.
\end{remark}

\section*{Appendix B. Some details for the proof of existence of solutions}\label{Appendix2}
We now show that $\mathcal{\bar{H}}(w,\vartheta,\upvarphi) \to 0$ as $\varepsilon \to 0$. First of all,
\begin{multline*}
\lim_{\e \to 0}\int_{\R^N} \int_0^T \int_0^t \frac{(\vartheta_{\e}(t)-\vartheta_{\e}(s)) (\upvarphi(t)-\upvarphi(s))}{(t-s)^{1+\upgamma}} \ ds \ dt \\
 = \lim_{\e \to 0} \mathop{\sum \sum}_{0\leq i<j\leq k} \int_{\e (j-1)}^{\e j}        \int_{\e (i-1)}^{\e i} \frac{(\vartheta_{\e}(\e j)-\vartheta_{\e}(\e i)) (\upvarphi(\e j)-\upvarphi(\e i))}{(\e(j-i))^{1+\upgamma}}. 
\end{multline*}
We write $\upvarphi(t)= \upvarphi(t)-\upvarphi_{\e}(t)+\upvarphi_{\e}(t)$ and subtract the above two terms. 
   Since $\upvarphi_{\e}(t) \to \upvarphi(t)$ and $\vartheta_{\e} \rightharpoonup \vartheta$ in $H^{\upgamma/2} \times \R^N$ we have that
\[
\lim_{\e \to 0} \left| \int_{\R^N} \int_0^T \int_0^t \frac{(\vartheta_{\e}(t)-\vartheta_{\e}(s))\left[(\upvarphi(t)-\upvarphi(s))
        -(\upvarphi_{\e}(t)- \upvarphi_{\e}(s))\right]}{(t-s)^{1+\upgamma}} \ ds \ dt \right| \to 0.
    \]
Next we must show that
\begin{equation}\label{e:1/4bound}
 \lim_{\e \to 0} 
      \mathop{\sum \sum}_{0\leq i<j\leq k} (\vartheta_{\e}(\e j)-\vartheta_{\e}(\e i))
      (\upvarphi(\e j)-\upvarphi(\e i)) 
       \int_{\e (j-1)}^{\e j} \int_{\e (i-1)}^{\e i} 
         \frac{1}{(t-s)^{1+\upgamma}}-\frac{1}{(\e(j-i))^{1+\upgamma}}=0.
\end{equation}
We break up the integral over two sets $(t-s)\leq \e^{1/4}$ and $(t-s)> \e^{1/4}$.
As a consequence of the strong and weak convergence of $\upvarphi_{\e}$ and $\vartheta_{\e}$ we have that 
\[
\begin{aligned}
0 &= \lim_{\e \to 0} C \int_{\R^N} \mathop{\int \int}_{t-s\leq \e^{1/4}}
\frac{\left|(\vartheta_{\e}(t)-\vartheta_{\e}(s))(\upvarphi_{\e}(t)-\upvarphi_{\e}(s))\right|}{(t-s)^{1+\upgamma}} \\
&\geq \lim_{\e \to 0} \int_{\R^N} \mathop{\sum \sum}_{\e (j-i) \leq \e^{1/4}}
\left|(\vartheta_{\e}(\e j)-\vartheta_{\e}(\e i))(\upvarphi(\e j)-\upvarphi(\e i))\right| \int_{\e (j-1)}^{\e j} \int_{\e (i-1)}^{\e i} 
\frac{1}{(\e(j-i))^{1+\upgamma}}.
\end{aligned}
\]
Now for $t-s>\e^{1/4}$, and $\e(i-1)\leq s \leq \e i$ and $\e (j-1)\leq t \leq \e j$
\begin{multline*}
\left| (t-s)^{-(1+\upgamma)} - (\e(j-i))^{-(1+\upgamma)}\right|
      \leq (\e^{1/4}-\e)^{-(1+\upgamma)} - (\e^{1/4})^{-(1+\upgamma)}\\
     \leq (1+\upgamma)(\e^{1/4}-\e)^{-(2+\upgamma)}\e 
     \leq (1+\upgamma)(\e^{1/4}/2)^{-(2+\upgamma)}\e 
     \leq C(\upgamma)\e^{(2-\upgamma)/4}. 
     \end{multline*}
and so \eqref{e:1/4bound} holds. \medskip
    
We now consider the parts in time
\[
\left| \int_{\R^N} \int_0^{T} \vartheta_{\e}(t)\upvarphi(t)\left[(T-t)^{-\upgamma}+ (t)^{-\upgamma} \right] \ dt \ dx - 
\int_{\R^N} \e \mathop{\sum \sum}_{0\leq i<j\leq k} \frac{\vartheta_{\e}(\e j)\upvarphi(\e j - \vartheta_{\e}(\e i)\upvarphi(\e i))}{(\e(j-i))^{1+\upgamma}}\right|.
    \]
By staying $\e^{1/2}$ away from $0$ and $T$ we have 
    \[
   \begin{aligned}
      &\left| \e^2 \sum_{T-\sqrt{\e}} \sum_{0 \leq i < 2j-k}
        \frac{\vartheta_{\e}(\e j)\upvarphi(\e j)}{(\e(j-i))^{1+\upgamma}}\right| 
        \leq C \int_{T-\e^{1/2}}^{T} \vartheta_{\e}(t) \upvarphi(t)\left[(T-t)^{-\upgamma} + t^{-\upgamma} \right]\to 0, \\
      & \left| \e^2 \sum_{i\leq \e^{1/2}} \sum_{2i < j \leq k}
        \frac{\vartheta_{\e}(\e j)\upvarphi(\e j)}{(\e(j-i))^{1+\upgamma}}\right| 
        \leq C \int_{0}^{\e^{1/2}} \vartheta_{\e}(t) \upvarphi(t)\left[(T-t)^{-\upgamma} + t^{-\upgamma} \right]\to 0.
\end{aligned}
\]
Now we change the above sum as we did previously. For $T/2 \leq t < T- \e^{1/2}$ we have the estimate
\begin{multline*}
0  \leq \upgamma\int_0^{2t-T}(t-s)^{-{1+\upgamma}} -\upgamma \e^{1-\upgamma} 
       \sum_{0 \leq i <2j-k}(j-i)^{-(1+\upgamma)}\\
        \leq \upgamma\int_0^{2t-T}(t-s)^{-(1+\upgamma)} \ ds - \upgamma\int_0^{2t-T}(t-(s-\e))^{-(1+\upgamma)} \ ds 
        = (T-t)^{-\upgamma} - (T-t +\e)^{-\upgamma} \\
        \leq \e^{-\upgamma/2} - (\e^{1/2}+\e)^{-\upgamma} 
         \leq \upgamma (\e)^{-(\alpha+1)/2}\e 
       = \upgamma\e^{(1-\upgamma)/2} \to 0.
\end{multline*}
Hence,
\begin{multline*}
\upgamma \mathop{\sum \sum}_{0\leq i<j\leq k} \frac{\vartheta_{\e}(\e j)\upvarphi(\e j - \vartheta_{\e}(\e i)\upvarphi(\e i))}{(\e(j-i))^{1+\upgamma}} \\
= \e \sum_{\e^{1/2} \leq \e j \leq T-\e^{1/2}}
\vartheta_{\e}(\e j)\upvarphi(\e j) \left[(T- \e j)^{-\upgamma} + (\e j)^{-\upgamma} \right] + o(1),
\end{multline*}
and therefore
\begin{equation*}
\lim_{\e \to 0}\e \sum_{\e^{1/2} \leq \e j \leq T-\e^{1/2}}
\vartheta_{\e}(\e j)\upvarphi(\e j) \left[(T- \e j)^{-\upgamma} + (\e j)^{-\upgamma} \right] 
\to \int_{0}^{T} \vartheta(t)\upvarphi(t)\left[(T- t)^{-\upgamma} + (t)^{-\upgamma} \right]. 
\end{equation*}

The only remaining part in time is 
\[
\int_{\R^N} \int_0^T \vartheta_{\e}(t) \partial_{\e}^{\upgamma} \upvarphi(t) \ dt \ dx - \int_{\R^N}  \e \sum_{0<j\leq k} \vartheta_{\e}(\e j) \partial_{\e}^{\upgamma} \upvarphi(\e j).   
\]
Since $\upvarphi$ is smooth we have that    $\vartheta_{\e} \rightharpoonup \vartheta$ in $H^{\upgamma/2}(0,T) \times \R^N$, by using \cite{Allen1} and Lemma~\ref{lem:contraction-estimate}. Hence in combination with the previous techniques this term also goes to zero.\medskip

The remaining term is the spatial term given by
\[
\lim_{\e \to 0} \int_a^T \mathcal{E}_{\e}(w_{\e},\upvarphi) \ dt 
= \lim_{\e \to 0} \e \sum_{0<j\leq k} \mathcal{E}(w_{\e}(\e j, x), \upvarphi(\e j,x)),
\]
which can be handled as in \cite{Allen1}.  \medskip
  
We notice that it could also be sufficient to show \eqref{Eq:main-vartheta} for kernels $\K(t,x,y)$ which are smooth and $0 \leq \K(t,x,y)\leq M$, with $|x-y| < M^{-1}$ for $M$ large. If $0 \leq \K(t,x,y) \leq M$, then we can find a sequence of solutions $w_k$ to \eqref{Eq:main-vartheta} with 
smooth kernels $\K_\e$. Then by convergence in the appropriate spaces, $w_\e \to w_M$ a solution to \eqref{Eq:main-vartheta} with bounded kernel $0\leq \K \leq M$. Then letting $M$ go to infinity one can obtain the desired  result.\medskip
 
Indeed, let us assume that the kernel $\K(t,x,y)$ is bounded and smooth. We define
\[
V(t,x,y):=\int_{a}^T \int_{\R^N}\int_{\R^N}\K(t,x,y)(w(t,x)-w(t,y))(\upvarphi(t,x)-\upvarphi(t,y))\ dx \ dy \ dt,
\]
and
\[
V_{\e}(t,x,y):=\K_{\e}(w_{\e}(t,x)-w_{\e}(t,y))
(\upvarphi_{\e}(t,x)-\upvarphi_{\e}(t,y)) \ dx \ dy \ dt.
\]
Then it follows that
\[
\begin{aligned}
&\left| V(t,x,y) - V_{\e}(t,x,y) \right| \\
&\leq \left|\int_{a}^T \int_{\R^N}\int_{\R^N} \K\left\{ (w(t,x)-w(t,y))  
      - (w_{\e}(t,x)-w_{\e}(t,y)) \right\} (\upvarphi(t,x)-\upvarphi(t,y)) \ dx \ dy \ dt \right| \\ 
     & \ +  \bigg| \int_{a}^T \int_{\R^N}\int_{\R^N}(w_{\e}(t,x)-w_{\e}(t,y))  \\
     & \times 
\left\{\K(\upvarphi(t,x)-\upvarphi(t,y)) - \K_{\e}(t,x,y)(\upvarphi_{\e}(t,x)-\upvarphi_{\e}(t,y)) \right\} \ dx \ dy \ dt \bigg|. \\
\end{aligned}
\]
Since $w_{\e} \to w$ in $(a,T) \times H^{\frac{s}{2}}(\R^N)$ the first term converges to zero. Also, this same convergence coupled with $\K$ and $\upvarphi$ being both smooth and bounded implies the second term goes to zero as well. \medskip

We have thus obtained a weak solutions $(w, \vartheta)$ to the problem \eqref{Eq:main-vartheta}. The uniqueness can be handled similarly as in \cite{Allen1} under the condition that $\vartheta$ satisfies \eqref{Eq:properties_beta} and Lemma~\ref{lem:contraction-estimate}.


\section*{Acknowledgements}

This work has been partially supported by the Agencia Estatal de Investigaci\'on (AEI) of Spain under grant MTM2016--75140--P, co-financed by the European Community fund FEDER, and Xunta de Galicia, grants GRC 2015--004 and R 2016/022.


\section*{References}

\end{document}